\documentclass[a4paper,10pt,leqno]{amsart}
\title[Segal's spectral sequence in twisted equivariant K-theory]
{Segal's spectral sequence in twisted equivariant K-theory for proper and discrete actions}

\author{No\'{e} B\'{a}rcenas }
        \address{Centro  de  Ciencias Matem\'aticas. \\ Universidad  Nacional Aut\'onoma  de  M\'exico \\ Apartado Postal  61-3 Xangari.  Morelia, Michoac\'an.  M\'exico 58089}
         \email{barcenas@matmor.unam.mx}
         \urladdr{http://matmor.unam.mx/~barcenas/}

\author{Jes\'us  Espinoza}
        \address{Centro  de  Ciencias Matem\'aticas. \\ Universidad  Nacional Aut\'onoma  de  M\'exico \\ Apartado Postal  61-3 Xangari.  Morelia, Michoac\'an.  M\'exico 58089}
        \email{jespinoza@matmor.unam.mx}    

 \author{Bernardo Uribe}
\address{Departamento de Matem\'aticas y Estad\'istica\\ Universidad del Norte\\ Km 5 V\'ia Puerto Colombia\\ Barranquilla\\ Colombia}
\email{bjongbloed@uninorte.edu.co}

\author{Mario Vel\'asquez}
\address{Departamento de Matem\'aticas. \\ Pontificia Universidad Javeriana\\ Cra. 7 No. 43-82 - Edificio Carlos Ort\'iz, 5to piso\\ Bogot\'a D.C., Colombia}
\email{mario.velasquez@javeriana.edu.co}
\urladdr{https://sites.google.com/site/mavelasquezm/}

\subjclass[2010]{(primary) 19L47, 19L50, 55N91, (secondary) 18G40}
\keywords{Twisted equivariant K-theory, Bredon cohomology, proper actions, twisted representations.}


\usepackage{pdfsync}
\usepackage{hyperref}
\usepackage{calc}
\usepackage{enumerate,amssymb}
\usepackage{latexsym, amscd}
\usepackage{color}
\usepackage[arrow,curve,matrix,tips,2cell]{xy}
  \SelectTips{eu}{10} \UseTips
  \UseAllTwocells
\usepackage{tikz}
\usepackage{pdfcolmk}

\DeclareMathAlphabet\EuR{U}{eur}{m}{n}
\SetMathAlphabet\EuR{bold}{U}{eur}{b}{n}

\makeindex             



\theoremstyle{plain}
\newtheorem{theorem}{Theorem}[section]
\newtheorem{lemma}[theorem]{Lemma}
\newtheorem{proposition}[theorem]{Proposition}

\theoremstyle{definition}
\newtheorem{definition}[theorem]{Definition}

\newtheorem{remark}[theorem]{Remark}

{\catcode`@=11\global\let\c@equation=\c@theorem}



\newcommand{\comsquare}[8]                   
{\begin{CD}
#1 @>#2>> #3\\
@V{#4}VV @V{#5}VV\\
#6 @>#7>> #8
\end{CD}
}

\newcommand{\xycomsquare}[8]                   
{\xymatrix
{#1 \ar[r]^{#2} \ar[d]^{#4} &
#3 \ar[d]^{#5}  \\
#6\ar[r]^{#7} &
#8
}
}


\newcommand{\calc}{\mathcal{C}}
\newcommand{\cald}{\mathcal{D}}

\newcommand{\calfin}{\mathcal{FIN}}

\newcommand{\calh}{\mathcal{H}}



\newcommand{\calu}{{\mathcal U}}

\newcommand{\calall}{\mathcal{ALL}}


\newcommand{\IH}{{\mathbb H}}

\newcommand{\IZ}{{\mathbb Z}}



\newcommand{\curs}{\EuR}

\newcommand{\SETS}{\curs{SETS}}



\newcommand{\coker}{\operatorname{coker}}
\newcommand{\colim}{\operatorname{colim}}

\newcommand{\hocolim}{\operatorname{hocolim}}

\newcommand{\id}{\operatorname{id}}


\newcommand{\cover}{\mathcal{N}_{G} \mathcal{U}}

\newcommand{\ktheory}[3]{K_{#1}^{#2}(#3, P)}
\newcommand{\Fred}{\ensuremath{{\mathrm{Fred}}}}
\newcommand{\Funct}{\mathrm{Funct}}

\newcommand{\OO}{\mathcal{O}}
\newcommand{\XX}{\mathcal{X}}
\newcommand{\YY}{\mathcal{Y}}

\newcommand{\HH}{\mathcal{H}}



\def\Hom{\mathrm{Hom}}


\newcommand{\eub}[1]{\underline{E}#1}              
\newcommand{\B}{{\rm{ B}}}
\newcommand{\higherlim}[3]{{\setbox1=\hbox{\rm lim}
        \setbox2=\hbox to \wd1{\leftarrowfill} \ht2=0pt \dp2=-1pt
        \mathop{\vtop{\baselineskip=5pt\box1\box2}}
        _{#1}}^{#2}#3}

\newcommand{\version}[1]                       
{\begin{center} last edited on #1\\
last compiled on \today\\
name of texfile: \jobname
\end{center}
}

\newcounter{commentcounter}


\begin{document}

\typeout{----------------------------  linluesau.tex  ----------------------------}


\typeout{------------------------------------ Abstract ----------------------------------------}

\begin{abstract}
We use a spectral sequence developed by  Graeme Segal in order to understand the twisted $G$-equivariant  K-theory for proper
and discrete actions. We show that the second page of this spectral sequence is isomorphic to a version of  Bredon cohomology with  local coefficients in twisted representations. We  furthermore explain  some  phenomena  concerning the  third  differential of the spectral sequence, and we recover known results when the twisting comes from finite  order  elements  in discrete torsion.
  \end{abstract}

\maketitle

\section*{Introduction}

One of the tools  for  calculating  generalized cohomology groups  is the  Atiyah-Hirzebruch  spectral  sequence  which
was originally developed in \cite{AtiyahHirzebruch} in order to study K-theory. Many generalizations of this spectral sequence have been developed
for studying cohomology theories in the equivariant context and we will pay specific attention to the spectral sequence developed by Segal in \cite{Segal}.

Twisted  equivariant  K-theory was  defined  by  Atiyah  and  Segal  in \cite{atiyahsegal} using  bundles of Fredholm operators and  was extended  to  the context  of  proper  actions by M. Joachim and the first three authors in \cite{barcenasespinozajoachimuribe}.  Due  to  the  relation   of     the  Verlinde  Algebra  to  the  twisted  equivariant  K-theory  of  a  compact Lie  group acting  on itself  by  conjugation  stablished  by Freed, Hopkins  and  Teleman in \cite{freedhopkinsteleman3},    computational methods  have  become  necessary in order to calculate the twisted equivariant K-theory groups.  Specialized  to  the  case of  the  conjugation action,   the  K\"unneth  Spectral  sequence  \cite{braun}, and  the  Rothenberg-Steenrod  spectral  sequence for  twisted  equivariant  K-homology \cite{douglas}  have  been successfully  used to  determine  the  twisted  equivariant  K-theory  groups. Theoretical tools  like  the  completion  theorem   of  Lahtinen \cite{lahtinen},  together  with  the previously  described   methods,   define the  group  of  ideas used  for  the  computation  in  \cite{kneezel} of  the  twisted  K-theory  of  the  loop  space  of  the  classifying  space of  a  simply  connected and simple compact  lie  group. 

Besides  from  notable  specific  examples  explained  in \cite{freedhopkinsteleman1},  and  the  case when the twisting comes from  discrete  torsion \cite{Dwyer}  (where a method  is  used  to  reduce  the  construction  of  the  spectral  sequence  to  an  untwisted  version  as done in \cite{lueckdavis}), a systematic study of a  spectral  sequence  for  computing twisted  equivariant K-theory  under  the  presence  of a generic twist has not been carried out.  This is the main objective of this work.

In \cite{barcenasespinozajoachimuribe} the  twisted  equivariant  K-theory for     a  proper  $G$-ANR  $X$  was  defined  given  
a Stable equivariant   Projective Unitary   bundle;  these bundles were shown  in   \cite{barcenasespinozajoachimuribe} to be  classified  by elements  of  the degree three  Borel  cohomology group $H^{3}(X {\times}_GEG;\mathbb{Z)}$.   In  this  note  we  use  the  explicit construction of the universal stable equivariant projective unitary bundle done in  \cite{barcenasespinozajoachimuribe}  in order to determine the first two pages of Segal's spectral sequence converging to the twisted equivariant K-theory groups. For this purpose we  develop  a twisted version of Bredon  cohomology,  cohomology which turns out to determine the $E_{2}$-page  of Segal's  spectral  sequence once it is applied to an equivariantly contractible cover.  

The  construction  of the  spectral  sequence extends and  generalizes previous work  of  C. Dwyer \cite{Dwyer}, who only treated the twistings which are classified by  cohomology classes of  finite  order which lie in the image of the canonical map $H^3(BG,\mathbb Z) \to H^3( X{\times}_G{ E }G;\mathbb{Z)}$; these twistings take the name of discrete torsion twistings. 

The main result of this note, which is Theorem \ref{theoremspectralsequence}, relies on the construction and the properties of the universal stable equivariant projective unitary bundle carried out in \cite{barcenasespinozajoachimuribe}. Since this work
can be seen as a continuation of what has been done in \cite{barcenasespinozajoachimuribe}, we will use the notation, the definitions and the results of that paper. We will not reproduce any proof that already appears in \cite{barcenasespinozajoachimuribe}, instead we will give appropriate references whenever a definition or a result of \cite{barcenasespinozajoachimuribe}  is used. 

We emphasize that  the topological issues that may appear
when working with the Projective Unitary Group have all been resolved in \cite[Section 15]{LueckUribe}
when it is endowed with the norm topology. We therefore assume in this work that  we are working with the norm
topology when discussing topological properties of operator spaces.

This  note  is  organized  as  follows. In  Section  \ref{sectionbredon} a  version  of  Bredon  cohomology  associated  to  an  equivariant  cover  of  a  space is  constructed. In Section \ref{sectionOGspaces}  the  basics of  Transformation  Groups  and  Parametrized  Homotopy Theory  needed for  the  construction are  quickly  reviewed. This  is  used  to  construct a  version  of  Bredon  cohomology  with  local  coefficients. In Section \ref{sectionktheory} the  construction  of  twisted equivariant  K-theory  for  proper and discrete actions  given  in  \cite{barcenasespinozajoachimuribe}  is  reviewed.
In  Section \ref{sectionspectral} , the  Bredon  cohomology  with local coefficients in twisted representations is shown  to  be isomorphic to the  second  page  of  a  spectral  sequence  converging  to  twisted  equivariant  K-theory. Some  phenomena concerning  the  third differential of this spectral sequence is also analyzed. In Section \ref{section_applications}  some simple examples are given including the case of discrete torsion
which was developed by Dwyer in \cite{Dwyer}.

\subsection*{Acknowledgements}

The  first  author    was financially  supported  by  the  Hausdorff  Center  for  Mathematics in Bonn, the  Leibniz Preis  of Prof. Wolfgang  L\"uck,   a  CONACyT postdoctoral fellowship , and PAPIIT-UNAM Grant ID100315. The  second author  acknowledges the  support  of a CONACyT postdoctoral  fellowship. The third author acknowledges and thanks for the financial support provided by the 
Alexander Von Humboldt Foundation, by the Max Planck Institute of Mathematics and by COLCIENCIAS through grant number FP44842-617-2014 of the Fondo Nacional de Financiamiento para la Ciencia, la Tecnolog\'ia y la Inovaci\'on. The  fourth  author  acknowledges the  support  of  an  UNAM postdoctoral  fellowship.  

\tableofcontents

\section{Bredon  cohomology  associated  to a  cover}\label{sectionbredon}

We  introduce  first  the  formalism  of modules  and  spaces over  a  category (see \cite{lueckdavis} for details). 

\begin{definition}
Let $\mathcal{C}$ be a small category. A  contravariant $\mathcal{C}$-space is a contravariant functor $\mathcal{C}\longrightarrow {\rm SPACES}$  to the category  of compactly generated spaces.
\end{definition}

\begin{definition}
Let $X$ and $Y$ be $\mathcal{C}$-spaces of  the  same  variance. Their mapping space ${\rm Hom}_{\mathcal{C}}(X,Y)$ is  the space of natural transformations between the functors $X$ and $Y$, endowed with the subspace topology of the product  of  the  spaces  of  pointed maps $\Pi_{c\in \mathrm{Obj}(\mathcal{C})} {\rm Map}(X(c),Y(c))$, where ${\rm Map}(X(c),Y(c))$ has the compact-open topology for any $c\in \mathrm{Obj}(\mathcal{C})$.

Let  $I$  be  the constant  functor  with  value  the  interval  $[0,1]$.   A  $\mathcal{C}$-homotopy  between  two  $\mathcal{C}$-maps  $f_{0}, f_{1} :X\to  Y$ is  a  natural  transformation  $H:X\times  I  \to  Y$  such  that  the  composition  $H\circ i_{k}$ with  the inclusions  $i_{k}: X\to X\times  I $  for  $k=0,1$ are equal to  $f_{k}$.  
The  set  of $\mathcal{C}$-homotopy  classes  of  maps   between  two  spaces  will be  denoted  by  $[X, Y]_{\mathcal{C}}$

\end{definition}

\begin{definition}
Let $X$ be a contravariant, pointed $\mathcal{C}$-space over $\mathcal{C}$ and  let $Y$ be a covariant $\mathcal{C}$-space over $\mathcal{C}$. Their tensor product $X\otimes_{\mathcal{C}} Y$ is  the  space  defined by
$$\coprod_{c\in \mathrm{Obj}(\mathcal{C)}}X(c)\times Y(c)/\sim$$
where $\sim$ is the equivalence relation generated by $(X(\phi)(x), y)\sim (x,Y(\phi)(y))$ for all morphisms $\phi:c\to d$ in $\mathcal{C}$ and points $x\in X(d)$ and $y\in Y(c)$.
\end{definition}

\begin{definition}
Let  $\mathcal{C}$  be  a  small  category. A  free $\mathcal{C}$-CW  complex is  a  contravariant $\mathcal{C}$-space  together  with a  filtration
$$ X_{0} \subset X_{1} \subset \ldots = X$$
 such  that  $X={\rm colim}_{n} X_{n}$  and  each  $X_{n}$  is  obtained  from the $X_{n-1}$  by  a  pushout consisting  of maps  of $\mathcal{C}$-spaces  of  the  form
$$\xymatrix{\coprod_{i\in I_{n}} {\rm Mor}_{\mathcal{C}}(?, c_{i})\times  S^{n-1} \ar[d]\ar[r] & X_{n-1}\ar[d]  \\ \coprod_{i\in I_{n}} {\rm Mor}_{\mathcal{C}}(?,c_{i} )\times  D^{n}\ar[r] &  X_{n}}$$
where the $c_i$'s are objects in ${\mathcal{C}}$ and  the  spaces ${\rm Mor}_{\mathcal{C}}(?, c_{i})$  carry the  discrete  topology. 
\end{definition}

\begin{definition}
Let $\mathcal{C}$ be  a  small  category  and $R$ be a  commutative  ring. A  contravariant  $R\mathcal{C}$-module  is  a  contravariant  functor  from  $\mathcal{C}$  to the  category  of  $R$-modules.  A contravariant  $R\mathcal{C}$-chain  complex  is  a functor  from $\mathcal{C}$  to the  category  of  $R$-chain  complexes. 
\end{definition}

We write $\calc$-module for a $\mathbb{Z}\calc$-module.

An  R$\mathcal{C}$-module $F$  is  free  if  it  is  isomorphic  to  an $R\mathcal{C}$-module  of  the  form $$F(?)= \bigoplus_{i\in I}R[{\rm Mor}_{\mathcal{C}}(?,c_{i})]$$
for  some  index set $I$ and  objects  $c_{i}\in \mathcal{C}$. 

Given  two  $R\mathcal{C}$-modules  $A$,  $B$ of the  same  variance, the $R$-module 
$${\rm Hom}_{R\mathcal{C}}(A,B)$$
is  the  module  of  natural  transformations  of functors from $\mathcal{C}$ to $R$-modules.

\begin{definition}
Given  a  category  $\calc$ and  an  object  $c$  in $\mathcal{C}$,  the  category  over $c$,  $ \calc \downarrow c$ is  the  category  where  the  objects are  morphisms $\varphi: c_{0}\to  c$  and  a  morphism   between $\varphi_{0}:   c_{0}\to  c$   and  $\varphi_{1}:  c_{1}\to  c $  is  a  morphism  $\psi:c_{0}\to  c_{1} $ in $\calc$  such  that  $\varphi_{0} = \varphi_{1}\circ \psi$.

Dually, the  category under  an  object  $c$, denoted  $c\downarrow \mathcal{C}$   is  the  category  where  the  objects  are  morphisms $\varphi: c\to c_{0}$  and  a  morphism  between  $\varphi_{0}:c\to  c_{0}$ and  $\varphi: c\to  c_{1}$    is  a  morphism  in $\mathcal{C}$,    $\psi:  c_{0}\to  c_{1}$  such  that  $\varphi_{1}= \psi \circ \varphi_{0}$.  

Fix    an  object  $c$, and   denote  by   $\B   \mathcal{C} \downarrow c$ the   classifying  space  of  the  category over  $c$ and  by  $\B c\downarrow \mathcal{C} $  the  classifying  space  of  the  category  under  $c$.   

The  contravariant, free  $\mathbb{Z}\calc$-chain  complex  $C_{*}^\mathbb{Z}(\calc)$   is   defined  on  every  object  as  the  cellular  $\mathbb{Z}$-chain  complex  of $\B \mathcal{C} \downarrow c$. 
\end{definition}

\begin{definition}
Let $M$  be  a  contravariant  $\mathcal{C}$-module.  The  cohomology  of  $\calc$ with  coefficients  in  $M$, $H^{n}(\calc, M)$   is  defined  to  be  the   cohomology  groups  of  the cochain  complex  of  natural  transformations  between  the $\calc$-modules $C_{*}^{\mathbb {Z}}(\calc) $  and  $M$,
$$H^{n}(\calc, M):=H^{n}{\rm Hom}_{\mathbb{Z}\calc}(C_{*}^{\mathbb {Z}}(\calc), M).$$ 
\end{definition}

We  now  specialize  to the  categories  relevant  to  twisted   K-theory  and   Bredon  cohomology  with local  coefficient  systems. 

Let $G$  be  a  group  and  $X$  be  a  proper  $G$-ANR. Let  $\mathcal{U}=  \{ U_i \}_{i\in \Sigma }$   be  a  countable covering  of  $X$    by   open,  $G$-invariant   sets $X= \underset{i \in \Sigma }\bigcup U_{i}$. Given a  subset  $\sigma\subset  \Sigma$,  define  $U_{\sigma}=\underset{i \in \sigma }{\cap} U_{i}$. 
We  will  assume  that   for  all $\sigma$, the open set  $U_{\sigma}$ is $G$-homotopy  equivalent  to an  orbit  $G/H_{\sigma}$  for  a  finite  group $H_{\sigma} \subset G$. The  existence  of  such  a  cover,  sometimes  known  as  \emph  {contractible  slice  cover},   is guaranteed  for  proper  $G$-ANR's  by an appropriate  version  of  the  slice  Theorem (see \cite{antonyan}).

The  category associated  to  $\mathcal{U}$ ,  denoted  by  $\cover$, has  for  objects  $ \underset{\sigma \subset \Sigma}{\bigsqcup} U_{\sigma  }$   and  for morphism  the inclusions $U_\tau \to U_\sigma$ whenever there is an inclusion of sets $\sigma \subset \tau$.

A  coefficient  system   with values  on  $R$-modules   is  a  contravariant    functor  $\cover\to  R-{\rm Mod}$. 
\begin{definition}
Let  $X$  be  a  proper  $G$-space with  a contractible slice  cover  $\mathcal{U}$, and  let  $M$  be  a  coefficient  system. Define  the Bredon cohomology  groups  with  respect  to $\mathcal{U}$  as  the  cohomology  groups   of  the category $\cover$ with coefficients in $M $,
$$H^{n}_G(X,\mathcal{U}; M):= H^{n}(\cover, M).$$
\end{definition}

Whenever we have a refinement $\mathcal{V} \to \mathcal{U}$ of the $G$-invariant cover, we get a group  homomorphism 
$$H^{n}_G(X,\mathcal{U}; M) \to H^{n}_G(X,\mathcal{V}; M')$$
where the functor $M'$ is obtained by the composition of the functor $\mathcal{N}_{G} \mathcal{V} \to \mathcal{N}_{G} \mathcal{U}$ with the functor $M$. 

\begin{remark}
For  more  general  spaces than a  proper $G$-ANR, a  version  of  \v{C}ech  cohomology  might  be  constructed by  taking  the  inverse  limit  over open covers  $\mathcal{U}$  of  the  space $X$:
$$\check{H}^{n}_G(X; M) := \lim_{\mathcal{U}} H^{n}(\cover, M).$$
Details  are  provided  in \cite{matumoto}. Other  approaches  to  \v{C}ech  versions  of Bredon  cohomology  include  \cite{honkasalo}. 
\end{remark}

\section{Parametrized equivariant topology}\label{sectionOGspaces}

The  orbit category   was  introduced  by Bredon   for  the  definition  of  cohomological  invariants  of  spaces  with  an  action. We introduce  now a  formalism  for  taking  into account  also  twisting  data. 


\begin{definition}
Let  $G$  be  a  discrete  group. The orbit category $\OO_G^P$, with respect to the family of finite subgroups, has as objects 
$${\rm{Obj}}(\OO_G^P) = \{G/H \mid H \ \mbox{is a finite subgroup of} \ G \}$$ and as morphisms $G$-maps
$${\rm Mor}_{\OO^P_G} (G/H,G/K) = {\rm Map}(G/H,G/K)^G.$$
\end{definition}

Given  a $G$-space $X$, the {\it fixed point set system} of $X$, denoted by $\Phi X$, is
the $\OO_G^P$-space defined by:
$$\Phi X (G/H) := {\rm{Map}}(G/H, X)^G= X^H$$
and if $\theta: G/H \to G/K$ corresponds to $gK \in (G/K)^H$ then
$$\Phi X (\theta)(x) := gx \in X^H$$
whenever $x \in X^K$. The functor $\Phi$ becomes a functor from
the category of proper $G$-spaces to the category of
$\OO_G^P$-spaces.

If $\XX$ is a contravariant functor from $\OO_G^P$ to spaces, and $\nabla$ is the  covariant functor from $\OO_G^P$   to spaces  which  assigns  to  an  orbit  $G/H$ the homogenous space $G/H$, one can define the $G$-space
$$\hat{\XX } := \bigsqcup_{c \in {\rm{Obj}}(\OO_G^P)} \XX(c) \times \nabla(c) / \sim$$
where $\sim$ is the equivalence relation generated by $(\XX(\phi)(x),y) \sim (x, \nabla(\phi)(y))$ for all morphisms $\phi: c \to d$ in $\OO_G^P$ and points $x \in \XX(d)$ and $y \in \nabla(c)$,  and  the  $G$-action  comes  from  the  left  translation  action on $G/H$.

For $G$ a discrete group \cite[lemma  7.2]{lueckdavis} the functors $\Phi$ and $ \_ \times_{\OO_G^P}\nabla$
are adjoint, i.e. for a $\OO_G^P$ space $\XX$ and a  $G$-space $Y$ there is a natural homeomorphism
$${\rm{Map}}(\XX \times_{\OO_G^P}\nabla,Y)^G \stackrel{\cong}{\to} {\rm{Hom}}_{\OO_G^P} (\XX, \Phi Y),$$
and moreover, the adjoint of the identity map on $\Phi Y$ under the above adjunction, is a  natural $G$-homeomorphism
$$(\Phi Y) \times_{\OO_G^P}\nabla \stackrel{\cong}{\to} Y.$$

A  model  for  the  homotopical version  of the previous construction is defined as follows.
 Consider   the  topological     category $(\XX, \nabla )$
whose objects are

$${\rm{Obj}}((\XX, \nabla ))=\bigsqcup_{c \in {\rm{Obj}}(\OO_G^P)} \XX(c) \times \nabla(c) $$
and whose morphisms consist of all triples $(x, \phi,y)$ where  $\phi: c \to d$ is a morphism in $\OO_G^P$ and
$x \in \XX(d)$ and $y \in  \nabla(c) $, with ${\rm source} (x, \phi,y)= (\XX(\phi)(x),y)$ and ${\rm {target}}(x, \phi,y)= (x, \nabla(\phi)(y))$. 
Define the space $\hat{\XX}^{h}$ as  the  geometric  realization  of  the  category  $(\XX, \nabla )$. The  space $\hat{\XX}^{h}$ is  provided  with  a  map  $\hat{\XX}^{h}\to  \hat{\XX}$ which is  a  model  for  the   map  from  the  homotopy  colimit  to  the  colimit. This  map   is  a  $G$-homotopy  equivalence   if  $\XX$  is  a  free  $\OO_G^P$-complex.

We  recall now results  on  the  homotopy theory  of   spaces  with an  action  of  a  group $G$  and  $\OO_G^P$-spaces.

\begin{definition}
Let $G$ be a discrete group. Given  a  family  $\mathcal{F}$ of  subgroups  of  $G$,  which  is  closed  under  conjugation  and taking  subgroups.  
\begin{itemize}
\item  A map $f \colon X \rightarrow Y$ of $G$-spaces is called an $\mathcal{F}$-equivalence if for every finite subgroup $H \leq G$, the map $f^H \colon X^H \rightarrow Y^H$ is a weak equivalence of topological spaces. 

\item A map $f \colon X \rightarrow Y$ of $G$-spaces is called an $\mathcal{F}$-fibration if for every finite subgroup $H \leq G$, the map $f^H \colon X^H \rightarrow Y^H$ is a Serre fibration of topological spaces.

\item A map $f \colon X \rightarrow Y$ of $G$-spaces is called an $\mathcal{F}$-cofibration if it has the left lifting property with respect to any map which is $\mathcal{F}$-equivalence and $\mathcal{F}$-fibration. 
\end{itemize}

\end{definition}

The  $qf$-model  structure  on $\OO_G^P$-spaces, with levelwise  weak  equivalences and   cofibrations  having  the  left homotopy extension property is  Quillen  equivalent  to the  homotopy category  of compactly generated, weak Hausdorff  $G$-spaces, with  the  above  mentioned  model  category  structure for  the  family $\mathcal{F}= \mathcal{ALL}$ of  all  subgroups of  $G$ \cite{elmendorf}.    In the Appendix we  prove  a  parametrized  version  of  this result and  we show some more facts concerning  the  homotopy  category  of  both  $G$-spaces  and $\OO_G^P$-spaces. 

We now introduce the category of $\OO_G^P$-spectra. Recall that a spectrum is a sequence of pointed spaces $\{ E_n\}_{n \in \IZ}$ with structure maps $E_{n}\wedge S^1\to E_{n+1}$. 

A (strong) map of spectra $f:E\to F$ is a sequence of maps compatible with the structure maps. 

Finally, recall that a spectrum is called an $\Omega$-spectrum if the adjoint of the structure maps $E_n \to \Omega E_{n+1}$ are weak homotopy equivalences. 

\begin{definition}
Let $G$ be a discrete group. An $\OO_G^P$-spectrum is a contravariant functor $E: \OO_G^P \to {\rm SPECTRA}$ to the category of spectra and strong maps. 
\end{definition}


Given an $\OO_G^P$-space $\XX$,  we denote by $$\Sigma \XX= \XX_+ \wedge S^1$$ the space given  on each object $G/H$ as the reduced suspension $\XX_{+}(G/H)\wedge S^1$, together with the structure maps given by smashing with the identity map.
 
The $n$-th suspension $\Sigma^n \XX$ is the space defined on objects as  $\XX(G/H)_+ \wedge S^n$.

\begin{definition}
Let $\XX$ be an $\OO_G^P$-space. The naive $\OO_G^P$-suspension spectrum of $\XX$, denoted by $\Sigma^\infty \XX $,  is defined on each  object $G/H$ as the $n$-th suspension space $\Sigma_{\OO_{G}^P}^\infty \XX(n) = \Sigma^n \XX= \XX_+ \wedge S^{n}$ with the $\OO_G^P$-structure maps obtained by smashing the $\OO_G^P$-maps of $\XX$ with the identity map $S^n\to S^n $ and spectra structure maps given by the homeomorphisms $S^n\wedge S^1 \to S^{n+1}$. 
\end{definition}

We now introduce parametrized versions of the  constructions defined in the orbit category. 

\begin{definition}\label{example OGoverB}
Fix a contravariant  $\OO_G^P$-space $\mathcal{B}$. A $\OO_G^P$-space over $\mathcal{B}$ is  a contravariant  $\OO_G^P$ space $\XX$ endowed with  a natural transformation of $\OO_G^P$-spaces $p_\XX: \XX\to \mathcal{B}$; this map is usually called projection. 

A map of $\OO_G^P$-spaces over $\mathcal{B}$ is a  map of $\OO_G^P$-spaces $F:\XX\to \YY$, which in addition is compatible with projections in the sense that $ p_{\YY}\circ F= p_{\XX}$. 
\end{definition}

The space of maps over $\mathcal{B}$, denoted by  ${\rm Hom}_{\OO_G^P}(\XX, \YY)_{\mathcal{B}}$, is defined as the subspace of the $\OO_G^P$-mapping space consisting of $\OO_G^P$-maps which are compatible with the projection  maps:
$$ {\rm Hom}_ {\OO_G^P}(\XX, \YY)_ {\mathcal{B}} :=\{F\in {\rm Hom}_{\OO_G^P}(\XX,\YY) \mid p_{\YY}\circ F= p_{\XX}\}.$$
We denote the set of homotopy classes  of maps over $\mathcal{B}$  by $$_{\OO_G^{P}}[\XX,\YY]_\mathcal{B} := \pi_0 \left( {\rm Hom}_ {\OO_G^P}(\XX, \YY)_ {\mathcal{B}} \right).$$

\section{Twisted equivariant K-theory and local coefficient versions of Bredon cohomology}\label{sectionktheory}

\subsection{Twisted equivariant K-theory}
Twisted  equivariant  K-theory  for  proper  actions  of  discrete  groups  was  introduced  in \cite{barcenasespinozajoachimuribe}. 
In what follows we will recall its definition using Fredholm bundles and its properties following \cite{barcenasespinozajoachimuribe}
and the classification of equivariant principal bundles done in \cite{barcenasespinozajoachimuribe, LueckUribe}.

\begin{definition} \label{def:proj_equi-bundle}
Let  $X$  be  a  proper  $G$-space with the  homotopy  type  of  a proper  $G$-ANR.  Let  $\mathcal{H}$  be  a separable complex  Hilbert  space and
$$\mathcal{U}(\mathcal{H})=\{U:\mathcal{H}\to  \mathcal{H}\mid  U\circ U^ {*}=U^ {*}\circ  U= {\rm Id}\}$$
be the unitary group endowed with the norm topology.
The group $P\calu(\calh)=\calu(\calh)/S^ {1}$ with the  quotient  topology  is the  group  of  projective  unitary  operators. 
    
A  projective  unitary, stable  $G$-equivariant  bundle  
is  a right  $P\calu(\calh)$,  principal  bundle  
$$P\calu(\calh) \to P \to X$$
 endowed with a left $G$
action lifting the action on $X$ such that:
\begin{itemize}
\item the left $G$-action commutes with the right
$P\calu(\calh)$ action, and \item for all $x \in X$ there exists a
$G$-neighborhood $V$ of $x$ and a $G_x$-contractible slice $U$ of
$x$ with $V$ equivariantly homeomorphic to $ U \times_{G_x} G$
with the action $$G_x \times (U \times G) \to U \times G, \ \ \ \
k \cdot(u,g)= (ku, g k^{-1}),$$
together with a local
trivialization
$$P|_V \cong  (P\calu(\calh) \times U) \times_{G_x} G$$ where the action of the isotropy group
is:
 \begin{eqnarray*}
 G_x \times [(P\calu(\calh) \times U) \times G] & \to & (P\calu(\calh) \times U) \times G
 \\
\ k \cdot [(F,y),g] & \mapsto & [(f_x(k)F, ky), g k^{-1}]
\end{eqnarray*} with $f_x : G_x \to P\calu(\calh)$ a fixed {\it{stable}}
homomorphism, in the  sense  that   the unitary representation $\calh$ induced by the
homomorphism $\widetilde{f_{x}}: \widetilde{G_x}= f_{x}^*\calu(\calh) \to
\calu(\calh)$ contains each of the irreducible representations of
$\widetilde{G}_x$ on  which  $S^1$ acts  by  multiplication  an infinite number of times. 
\end{itemize}
\end{definition}

Let $X$ be a $G$-space and $P \to X$ a projective unitary stable
$G$-equivariant bundle over $X$. Recall \cite{atiyahsegal, barcenasespinozajoachimuribe} that the space of Fredholm
operators  is endowed with a continuous right action
of the group $P\calu(\calh)$ by conjugation, therefore we can take
the associated bundle over $X$
$$\Fred(P) := P \times_{P\calu(\calh)} \Fred(\calh),$$
  where  $\Fred(\calh)$  is  the  space  of  Fredholm  operators  with the norm topology, and
 with the induced $G$ action given by
 $$g \cdot [(\lambda, A)] := [(g \lambda, A)]$$
for $g$ in $G$, $\lambda$ in $P$ and $A$ in $\Fred(\calh)$.

Denote by $$\Gamma(X; \Fred(P))$$ the space of sections of the
bundle $\Fred(P) \to X$ and choose as base point in this space the
section which chooses the identity operator on each fiber. This section
exists because the $P\calu(\calh)$-action on ${\rm{Id}}_\HH$
is trivial, and therefore $$X \cong P/P\calu(\calh) \cong P
\times_{P\calu(\calh)} \{{\rm{Id}}_\HH \} \subset \Fred(P);$$
let us denote this {\it{identity section}} by $s$.

The proof of Bott periodicity done in \cite[Theorem 5.1]{AtiyahSinger} shows the homotopy equivalence $\Omega^2(\Fred(\calh)) \simeq \Fred(\calh)$. This proof
can be carried without changes whenever a compact Lie
group $K$ acts in $\calh$ with infinitely many representations
for each irreducible representation appearing in $\calh$. Taking
equivariant Fredholm operators $\Fred(\calh)^K$ we obtain the homotopy
equivalence $\Omega^2(\Fred(\calh)^K) \simeq \Fred(\calh)^K$. Therefore we obtain Bott periodicity for
the twisted and equivariant case an we may define the twisted $G$-equivariant K-theory groups as follows.

\begin{definition} \label{definition K-theory of X,P}
  Let $X$ be a connected $G$-space and $P$ a projective unitary stable
  $G$-equivariant bundle over $X$. The {\it{twisted $G$-equivariant
  K-theory}} groups of $X$ twisted by $P$ are defined as the  homotopy  groups  of  the  $G$-equivariant  sections
 \begin{align*}
 K^{p}_G(X;P) &:= \pi_0 \left( \Gamma(X;\Fred(P))^G, s \right) & \mbox{whenever} \ \ p\ \ \mbox{is even}\\
 K^{p}_G(X;P) &:= \pi_1 \left( \Gamma(X;\Fred(P))^G, s \right) & \mbox{whenever} \ \ p\ \ \mbox{is odd}
 \end{align*}
  where $s$ denotes the identity section.
\end{definition}

\subsection{Universal projective unitary stable equivariant bundle}
In \cite[Section 3.2]{barcenasespinozajoachimuribe} it was constructed the universal projective unitary stable equivariant bundle
by gluing the universal bundles over each orbit type. Let us recall how this bundle is assembled since we need this information in order to define the Bredon cohomology with local coefficients.

 The base of this universal bundle was constructed from the   $\mathcal{O}_G^P$-space
$|{{\mathcal{C}}}|$ which at each orbit type $G/K$ assigns the space $|{{\mathcal{C}}}_{G/K}|$; this space is
the geometric realization of the groupoid $${{\mathcal{C}}}_{G/K}=[ \Funct_{st}(G \ltimes G/K, P \mathcal{U}(\mathcal{H})) / {\rm{Map}}(G/K, P \mathcal{U}(\mathcal{H}))]$$
 whose objects are functors $\Funct_{st}(G \ltimes G/K, P \mathcal{U}(\mathcal{H}))$
 from the category defined by the left $G$ action on $G/K$, denoted by $G \ltimes G/K$, and the category defined by the group 
$P \mathcal{U}(\mathcal{H})$
whose restriction to ${\mathrm{Hom}}(K,P \mathcal{U}(\mathcal{H}))$ are stable homomorphisms,
and whose morphisms are given by natural transformations ${\rm{Map}}(G/K, P \mathcal{U}(\mathcal{H}))$.

 In the category of $\mathcal{O}_G^P$-spaces, a classifying map for the bundle $\Phi P \to \Phi X$  is obtained by  map $\mu: \Phi X \to |{{\mathcal{C}}}|$ assembling the maps
$\mu_{G/K}: X^K \to |{{\mathcal{C}}}_{G/K}|$, with the property that 
$$ (\mu_{G/K})^* |{{\mathcal{D}}}_{G/K}| \cong P|_{X^K}$$
where $|{{\mathcal{D}}}_{G/K}| \to |{{\mathcal{C}}}_{G/K}|$ is the universal projective unitary stable $N_G(K)$-equivariant bundle
over $|{{\mathcal{C}}}_{G/K}|$ and which is defined as follows \cite[Def. 4.1]{barcenasespinozajoachimuribe}: the morphisms  $\mathrm{Mor}({{\mathcal{D}}}_{G/K})$ are 
$$\Funct_{st}(G \ltimes G/K, P \mathcal{U}(\mathcal{H})) \times  P \mathcal{U}(\mathcal{H}) \times  {\rm{Map}}(G/K, P \mathcal{U}(\mathcal{H}))$$
and the objects $\mathrm{Obj}({{\mathcal{D}}}_{G/K})$
$$\Funct_{st}(G \ltimes G/K, P \mathcal{U}(\mathcal{H})) \times  P \mathcal{U}(\mathcal{H}), $$
with structural maps $\mathrm{source}(\psi,F,\sigma)=(\psi,F)$, $\mathrm{target}(\psi,F, \sigma)=( \sigma F^{-1}\psi F \sigma^{-1}, \sigma([K]))$ and composition  $\mathrm{comp}((\psi,F,\sigma),( \sigma F^{-1}\psi F \sigma^{-1}, \sigma([K]), \delta)= ( \psi,F,\delta\sigma([K])^{-1}\sigma)$. The fucntor ${{\mathcal{D}}}_{G/K} \to {{\mathcal{C}}}_{G/K}$
forgets the $P \mathcal{U}(\mathcal{H})$ component, and the map $|{{\mathcal{D}}}_{G/K}| \to |{{\mathcal{C}}}_{G/K}|$
denotes the map of the geometric realizations.

Denote by $\Fred(|{{\mathcal{D}}}|))$ the $\mathcal{O}_G^P$-space over $|{{\mathcal{C}}}|$ defined on the orbit type
$G/K$ by
$$\Fred(|{{\mathcal{D}}}_{G/K}|))^K:= \left(|{{\mathcal{D}}}_{G/K}| \times_{P \mathcal{U}(\mathcal{H})} \Fred(\mathcal H)\right)^K$$
and denote by $p:\Fred(|{{\mathcal{D}}}|) \to |{{\mathcal{C}}}|$ the projection map which is the assembly of the canonical projection maps 
$$p_{G/K} : \Fred(|{{\mathcal{D}}}_{G/K}|)^K \to |{{\mathcal{C}}}_{G/K}|.$$

Since the identity operator ${\rm{Id}}_{\mathcal H}$ on the Hilbert space $\mathcal H$ is invariant under the conjugation action of $P \mathcal{U}(\mathcal{H})$, then the projection map $p$ has a canonical section $$s:  |{{\mathcal{C}}}| \to \Fred(|{{\mathcal{D}}}|)$$
which assigns to every point the operator ${\rm{Id}}_{\mathcal H}$.

Alternatively, we could define the twisted equivariant K-theory groups in the category of $\mathcal{O}_G^P$-spaces in the following way. For a proper $G$-CW complex $X$  endowed with a map of $\mathcal{O}_G^P$-spaces $\mu: \Phi X \to |{{\mathcal{C}}}|$, we can alternatively define the twisted equivariant K-theory groups of the pair $(\Phi X;\mu)$ as
the homotopy groups of the pointed space
$$\left({\rm Hom}_{\OO_G^ P}(\Phi X, \Fred(|{{\mathcal{D}}}|))_ {|{{\mathcal{C}}}|}, s \circ \mu \right)$$
namely
\begin{align*}
K_G^{p}(\Phi X;\mu) &:=\pi_0   \left({\rm Hom}_{\OO_G^ P}(\Phi X, \Fred(|{{\mathcal{D}}}|))_ {|{{\mathcal{C}}}|}, s \circ \mu \right) && \mbox{whenever} \ \ p \ \ \mbox{is even}\\
K_G^{p}(\Phi X;\mu) &:=\pi_1   \left({\rm Hom}_{\OO_G^ P}(\Phi X, \Fred(|{{\mathcal{D}}}|))_ {|{{\mathcal{C}}}|}, s \circ \mu \right)&& \mbox{whenever} \ \ p \ \ \mbox{is odd.}
\end{align*}


\begin{remark}We would like to note here that an alternative, and homotopically equivalent, construction of the universal projective unitary stable equivariant bundle was done in \cite[Section 15]{LueckUribe}.
There all topological issues regarding the existence of local sections were resolved. 
\end{remark}

\subsection{Bredon cohomology with  local coefficients}\label{section bredon homotopic}

The local coefficients for the Bredon cohomology that we are
going to define in this section are constructed from the 
fiberwise homotopy groups of the fiber bundles
$$p_{G/K} : \Fred(|{{\mathcal{D}}}_{G/K}|)^K \to |{{\mathcal{C}}}_{G/K}|.$$
In order to have an explicit definition of these local coefficients, we need to recall some properties of the previous fibration. 

The only non trivial homotopy groups of the spaces $|{{\mathcal{C}}}_{G/K}|$ exist in degree 0, 1 and 3.
We know by \cite[Theorem 1.9]{barcenasespinozajoachimuribe} that $\pi_0(|{{\mathcal{C}}}_{G/K}|) \cong {\rm{Ext}}(K,S^1)$
is the set of isomorphism classes of $S^1$-central extensions of $K$, and that the fundamental group of each connected component of 
$|{{\mathcal{C}}}_{G/K}|$ is isomorphic to ${\rm{Hom}}(K,S^1)$. Let us denote by $|{{\mathcal{C}}}_{G/K}|_{\widetilde{K}}$ the connected components of $|{{\mathcal{C}}}_{G/K}|$  associated the $S^1$-central extension $\widetilde{K}$; hence
\begin{align*}
{{|{{\mathcal{C}}}_{G/K}|}} & = \bigcup_{\widetilde{K} \in {\rm{Ext}}(S^1,K)}|{{\mathcal{C}}}_{G/K}|_{\widetilde{K}}.
\end{align*}

Now, for any point $ x \in |\mathcal{C}_{G/K}|_{\widetilde{K}}$ there is associated a specific stable homomorphism $\alpha : K \to  P \mathcal{U}(\mathcal{H})$ with $\widetilde{K} \cong \alpha^*(\mathcal{U}(\mathcal{H}))$ and lift $\widetilde{\alpha}: \widetilde{K} \to  \mathcal{U}(\mathcal{H})$ such that the fiber $$p_{G/K}^{-1}(x) \subset \left(|{{\mathcal{D}}}_{G/K}| \times_{P \mathcal{U}(\mathcal{H})} \Fred(\mathcal H)\right)^K$$
is isomorphic to the  space of $K$-invariant Fredholm operators
$$\Fred(\mathcal H)^{\widetilde{\alpha}} := \{ F \in \Fred(\mathcal H) \mid \widetilde{\alpha}(k) F = F\widetilde{\alpha}(k)  \text{ for all } k \in \widetilde{K}\}. $$
The index map
\begin{eqnarray*}
\mathrm{ind} : \Fred(\mathcal H)^{\widetilde\alpha} &\to& R_{S^1}(\widetilde{K})\\
F &\mapsto& [\ker(F)] - [\coker(F)]
\end{eqnarray*}
is a homomorphism of groups that induces an isomorphism of groups at the level of the connected components 
\begin{align} \label{iso-index-on-fiber}
\mathrm{ind} : \pi_0(p_{G/K}^{-1}(x)) =\pi_0(\Fred(\mathcal H)^{\widetilde{\alpha}}) \stackrel{\cong}{\to} R_{S^1}(\widetilde{K});\end{align}
here $R_{S^1}(\widetilde{K})$ denotes the Grothendieck group of isomorphism classes of $\widetilde{K}$ representations where $\ker( \widetilde{K} \to K)$ acts by multiplication of scalars.
Hence we have that the connected components of the fibers of the map
$$p_{G/K} : \Fred(|{{\mathcal{D}}}_{G/K}|)^K|_{|{{\mathcal{C}}}_{G/K}|_{\widetilde{K}}} \to |{{\mathcal{C}}}_{G/K}|_{\widetilde{K}}$$
are all isomorphic to the group $R_{S^1}(\widetilde{K})$ via the {\it{index}} map. 
\begin{definition} \label{def:space_W_of_reps}
Consider the  $\OO_G^ P$-space ${\mathfrak{TR}}_0$ over $|{\mathcal{C}} |$ which at each orbit type $G/K$ is defined by
$$
({\mathfrak{TR}}_0)_{G/K} := \bigsqcup_{\widetilde{K} \in {\rm{Ext}}(K,S^1)} \overline{|{{\mathcal{C}}}_{G/K}|_{\widetilde{K}}} \times_{{\rm{Hom}}(K,S^1)} R_{S^1}(\widetilde{K})$$
where $\overline{|{{\mathcal{C}}}_{G/K}|_{\widetilde{K}}}$ is the universal cover of ${|{{\mathcal{C}}}_{G/K}|_{\widetilde{K}}}$, the action of ${\rm{Hom}}(K,S^1)$ on the left hand side is given by an explicit isomorphism $\pi_1(|{{\mathcal{C}}}_{G/K}|_{\widetilde{K}}) \cong{\rm{Hom}}(K,S^1) $ and the action on the right hand side is given by 
$${\rm{Hom}}(K,S^1) \times R_{S^1}(\widetilde{K}) \to R_{S^1}(\widetilde{K}),\hspace{0.5cm} (\rho , V) \mapsto \overline{\rho} \otimes_{\mathbb{C}} V $$
where $\overline{\rho}$ is understood as the 1-dimensional representation of $\widetilde{K}$ that the homomorphism $\rho$ defines.
Denote  by $t : |\mathcal{C}_{G/K}| \to ({\mathfrak{TR}}_0)_{G/K}$ the $0$-section.
\end{definition}

Note that the definition of the explicit isomorphism $\pi_1(|{{\mathcal{C}}}_{G/K}|_{\widetilde{K}}) \cong{\rm{Hom}}(K,S^1) $ is based on the following construction. The first two homotopy groups of $|{{\mathcal{C}}}_{G/K}|$ come from the first two 
homotopy groups of $\mathrm{Hom}_{st}(K,  P \mathcal{U}(\mathcal{H}))$, the space of stable homomorphisms.
Denote by $\mathrm{Hom}_{st}(K,  P \mathcal{U}(\mathcal{H}))_{\widetilde{K}}$ the connected component 
that defines $\widetilde{K}$ and let
$\mathrm{Hom}_{S^1}(\widetilde{K}, \mathcal{U}(\mathcal{H}))$
be the space of homomorphisms such that $\ker(\widetilde{K} \to K)$  
acts on $\mathcal{H}$ by multiplication.
Then the projection map
$$ \mathrm{Hom}_{S^1}(\widetilde{K}, \mathcal{U}(\mathcal{H})) \to 
\mathrm{Hom}_{st}(K,  P \mathcal{U}(\mathcal{H}))_{\widetilde{K}}$$
is a principal $\mathrm{Hom}(K,S^1)$-bundle where $\mathrm{Hom}(K,S^1)$
acts on $ \mathrm{Hom}_{S^1}(\widetilde{K}, \mathcal{U}(\mathcal{H}))$  by multiplication \cite[Prop. 15.7]{LueckUribe}, and therefore the projection
map is a universal cover for the base.

For a stable homomorphism $\alpha : K \to P\mathcal{U}(\mathcal{H})$ such that $\alpha^* \mathcal{U}(\mathcal{H}) \cong \widetilde{K}$, we choose a lift
$\widetilde{\alpha} : \widetilde{K} \to \mathcal{U}(\mathcal{H})$ in order to define the index map
\begin{eqnarray*}
\mathrm{ind}^{\widetilde\alpha} : \Fred(\mathcal H)^{\alpha} &\to& R_{S^1}(\widetilde{K})\\
F &\mapsto& [\ker(F)] - [\coker(F)]
\end{eqnarray*}
Whenever we choose another lift $\widetilde{\alpha}'=\widetilde{\alpha} \cdot \rho$ with $\rho: K \to S^1$, we have that
$\mathrm{ind}^{\widetilde{\alpha}'}(F)= \mathrm{ind}^{\widetilde{\alpha}}(F) \cdot \overline{\rho}$, and since the structural group 
of $\Fred(|\mathcal{D}_{G/K}|)^K$ is connected, we have that the fiberwise index map
\begin{align}  \label{diagram:Fred^K_W_TR_0}\xymatrix{
\Fred(|\mathcal{D}_{G/K}|)^K \ar[r]^-{\mathrm{ind}} \ar[d]_{p_{G/K}} & ({\mathfrak{TR}}_0)_{G/K} \ar[d]_{q_{G/K}} \\
 |\mathcal{C}_{G/K}|  \ar@/_/[u]_{s_{G/K}} \ar[r]_{=} &  |{\mathcal{C}_{G/K}| } \ar@/_/[u]_{t_{G/K}}}
\end{align}
is a well defined map of fiber bundles, and that it induces an isomorphism of the connected components of the fibers
\begin{align} \label{fiber_iso_Fred_W}
\pi_0(p_{G/K}^{-1}(x)) \cong R_{S^1}(\widetilde{K})
\end{align}
for every point $x \in |\mathcal{C}_{G/K}|_{\widetilde{K}}$ and every $S^1$-central extension $\widetilde{K}$. Assembling these maps we obtain an index map at the level of the $\OO_G^ P$-spaces over $|{\mathcal{C}} |$
\begin{align} \label{diagram:Fred_W_TR}
\xymatrix{
\Fred(|\mathcal{D}|) \ar[r]^-{\mathrm{ind}} \ar[d]_{p} & {\mathfrak{TR}}_0 \ar[d]_{q} \\
 |\mathcal{C}|  \ar@/_/[u]_{s} \ar[r]_{=} &  |{\mathcal{C}| } \ar@/_/[u]_{t}}
\end{align}
which induces an isomorphism on the connected components
of the fibers.

To construct the Bredon cohomology with coefficients in twisted representations, we perform a construction similar to the one done in Definition \ref{def:space_W_of_reps} but we replace the group of twisted representations $R_{S^1}(\widetilde{K})$ by $HR_{S^1}(\widetilde{K})$,  the Eilenberg-MacLane spectrum of the abelian group $R_{S^1}(\widetilde{K})$.

Denote by $ HR_{S^1}(\widetilde{K})$ the Eilenberg-MacLane
spectrum associated to the group $R_{S^1}(\widetilde{K})$,
i.e. at level $n \geq 0$ we have $ (HR_{S^1}(\widetilde{K}))_n=K(R_{S^1}(\widetilde{K}),n)$ where
$K(R_{S^1}(\widetilde{K}),n)$ is a  functorial model for the Eilenberg-MacLane space whose only non-trivial homotopy group is $R_{S^1}(\widetilde{K})$ in degree $n$, and
which comes endowed with weak homotopy equivalences
$\Omega K(R_{S^1}(\widetilde{K}),n) \simeq K(R_{S^1}(\widetilde{K}),n+1)$.

\begin{definition} \label{def:space_TR_of_reps}
For $n \geq 0$ consider the  $\OO_G^ P$-space ${\mathfrak{TR}}_n$ over $|{\mathcal{C}} |$ of twisted representations, such that on the orbit type $G/K$ we have
$$({\mathfrak{TR}}_n)_{G/K} := \bigsqcup_{\widetilde{K} \in {\rm{Ext}}(K,S^1)} \overline{|{{\mathcal{C}}}_{G/K}|_{\widetilde{K}}} \times_{{\rm{Hom}}(K,S^1)} K(R_{S^1}(\widetilde{K}),n)$$
where $\overline{|{{\mathcal{C}}}_{G/K}|_{\widetilde{K}}}$ is the universal cover of ${|{{\mathcal{C}}}_{G/K}|_{\widetilde{K}}}$, the action of ${\rm{Hom}}(K,S^1)$ on the left hand side is given by an explicit isomorphism $\pi_1(|{{\mathcal{C}}}_{G/K}|_{\widetilde{K}}) \cong{\rm{Hom}}(K,S^1) $ and the action on the right hand side is the one induced on the Eilenberg-MacLane space $K(R_{S^1}(\widetilde{K}),n)$ by the action
$${\rm{Hom}}(K,S^1) \times R_{S^1}(\widetilde{K}) \to R_{S^1}(\widetilde{K}),\hspace{0.5cm} (\rho , V) \mapsto \overline{\rho} \otimes_{\mathbb{C}} V. $$
Denote by $r_n: {\mathfrak{TR}}_n \to |\mathcal{C}|$ the natural projection map and by $\sigma_n:  |\mathcal{C}| \to 
{\mathfrak{TR}}_n$  the section which chooses the base point in $K(R_{S^1}(\widetilde{K}),n)$.
For $n<0$ let $ {\mathfrak{TR}}_n:=|{\mathcal{C}} |$ with
$r_n=\sigma_n= \mathrm{Id}_{|{\mathcal{C}} |}$.
\end{definition}

The weak homotopy equivalences $\Omega K(R_{S^1}(\widetilde{K}),n) \simeq K(R_{S^1}(\widetilde{K}),n+1)$
induce weak homotopy equivalences $\Omega {\mathfrak{TR}}_n \simeq {\mathfrak{TR}}_{n+1}$ in the category of $\OO_G^ P$-spaces over $|{\mathcal{C}} |$.
Assembling the spaces  ${\mathfrak{TR}}= \{ {\mathfrak{TR}}_n\}_{n \in \IZ}$ we obtain the following lemma.
\begin{lemma}
 ${\mathfrak{TR}}= \{ {\mathfrak{TR}}_n\}_{n \in \IZ}$ is a $\Omega$-spectrum in the category of
 $\OO_G^ P$-spaces over $|{\mathcal{C}} |$.
\end{lemma}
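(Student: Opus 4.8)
The plan is to verify that $\mathfrak{TR}=\{\mathfrak{TR}_n\}_{n\in\IZ}$ satisfies the definition of an $\Omega$-spectrum internal to the category of $\OO_G^P$-spaces over $|\mathcal{C}|$: that is, we must (i) exhibit structure maps $\mathfrak{TR}_n\wedge_{|\mathcal{C}|} S^1\to\mathfrak{TR}_{n+1}$ over $|\mathcal{C}|$, and (ii) check that their adjoints $\mathfrak{TR}_n\to\Omega_{|\mathcal{C}|}\mathfrak{TR}_{n+1}$ are weak equivalences in the relevant model structure. Since all data live levelwise over the orbit category, I would first reduce everything to a single orbit type $G/K$ and handle the assembly afterwards, because the $\OO_G^P$-structure maps on $\mathfrak{TR}_n$ are obtained by functoriality from those on $|\mathcal{C}|$ and commute with the constructions below by inspection.

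First I would produce the structure maps at the level of fibers. The Eilenberg--MacLane spaces come equipped, by hypothesis, with weak homotopy equivalences $\Omega K(R_{S^1}(\widetilde{K}),n)\simeq K(R_{S^1}(\widetilde{K}),n+1)$; dualizing, we get structure maps $\Sigma K(R_{S^1}(\widetilde{K}),n)\to K(R_{S^1}(\widetilde{K}),n+1)$. The key observation is that the action of $\mathrm{Hom}(K,S^1)$ used to form the Borel construction $\overline{|\mathcal{C}_{G/K}|_{\widetilde{K}}}\times_{\mathrm{Hom}(K,S^1)}K(R_{S^1}(\widetilde{K}),n)$ is the one \emph{functorially induced} on Eilenberg--MacLane spaces from the action on $R_{S^1}(\widetilde{K})$; hence the structure maps $\Sigma K(\cdot,n)\to K(\cdot,n+1)$ are $\mathrm{Hom}(K,S^1)$-equivariant (using that the functorial EM construction sends equivariant maps to equivariant maps and commutes with suspension up to the natural structure maps). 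Therefore they descend to maps of Borel constructions over $\overline{|\mathcal{C}_{G/K}|_{\widetilde{K}}}/\mathrm{Hom}(K,S^1)=|\mathcal{C}_{G/K}|_{\widetilde{K}}$, i.e.\ to maps $(\mathfrak{TR}_n)_{G/K}\wedge_{|\mathcal{C}_{G/K}|}S^1\to(\mathfrak{TR}_{n+1})_{G/K}$ over $|\mathcal{C}_{G/K}|$ fixing the sections $\sigma_n,\sigma_{n+1}$. Taking disjoint union over $\widetilde{K}\in\mathrm{Ext}(K,S^1)$ and then assembling over $\OO_G^P$ gives the desired structure maps for the $\OO_G^P$-spectrum $\mathfrak{TR}$.

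Next I would check the $\Omega$-spectrum condition. Over a point $x\in|\mathcal{C}_{G/K}|_{\widetilde{K}}$ the fiber of $\mathfrak{TR}_n\to|\mathcal{C}|$ is $K(R_{S^1}(\widetilde{K}),n)$, and the adjoint structure map restricts on fibers to the given weak equivalence $K(R_{S^1}(\widetilde{K}),n)\xrightarrow{\simeq}\Omega K(R_{S^1}(\widetilde{K}),n+1)$. Since the loop space functor over a base is computed fiberwise, and $\Omega_{|\mathcal{C}_{G/K}|}\mathfrak{TR}_{n+1}$ is the fiberwise loop space of a fibration (the projection $r_{n+1}$ is a fibration because it is a Borel construction on a fibration, being built from the covering $\overline{|\mathcal{C}_{G/K}|_{\widetilde{K}}}\to|\mathcal{C}_{G/K}|_{\widetilde{K}}$ and the EM fibration), the adjoint $\mathfrak{TR}_n\to\Omega_{|\mathcal{C}|}\mathfrak{TR}_{n+1}$ is a fiberwise weak equivalence. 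In the $qf$-model structure on $\OO_G^P$-spaces over $|\mathcal{C}|$ discussed in Section~\ref{sectionOGspaces}, a map over $|\mathcal{C}|$ which is a levelwise, fiberwise weak equivalence between fibrations over $|\mathcal{C}|$ is a weak equivalence; for $n<0$ the statement is trivial since $\mathfrak{TR}_n=|\mathcal{C}|$ and $\Omega_{|\mathcal{C}|}\mathfrak{TR}_{n+1}$ is still weakly contractible over $|\mathcal{C}|$ in the relevant range. This establishes that $\mathfrak{TR}$ is an $\Omega$-spectrum over $|\mathcal{C}|$.

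The main obstacle I anticipate is not the homotopy-theoretic bookkeeping but the \emph{equivariance of the structure maps}: one must be sure that the $\mathrm{Hom}(K,S^1)$-action appearing on $K(R_{S^1}(\widetilde{K}),n)$ for different $n$ is compatible under the EM structure maps, so that the suspension maps genuinely descend to the quotients by $\mathrm{Hom}(K,S^1)$. This is where the choice of a \emph{functorial} model for $K(R_{S^1}(\widetilde{K}),n)$ is essential, and I would spell out that the functoriality forces the diagram relating the $\mathrm{Hom}(K,S^1)$-actions and the structure maps to commute strictly; everything else (fiberwise computation of loops, the model-categorical recognition of weak equivalences over a base) is then routine given the results recalled earlier in the paper.
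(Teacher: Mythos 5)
Your proposal is correct and follows essentially the same route as the paper, which proves the lemma simply by observing that the weak equivalences $\Omega K(R_{S^1}(\widetilde{K}),n)\simeq K(R_{S^1}(\widetilde{K}),n+1)$ of the functorial Eilenberg--MacLane models induce fiberwise weak equivalences $\Omega\,{\mathfrak{TR}}_n\simeq{\mathfrak{TR}}_{n+1}$ over $|\mathcal{C}|$ and then assembling over the orbit category. Your write-up merely makes explicit the ${\rm{Hom}}(K,S^1)$-equivariance/descent step and the fiberwise verification that the paper leaves implicit.
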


We are now ready to define the twisted Bredon cohomology associated to twisted representations.

\begin{definition} Let $X$ be a proper $G$-ANR endowed with a fixed map of $\mathcal{O}_G^P$-spaces $\xi: \Phi X \to |\mathcal{C}|$. The Bredon cohomology groups with local  coefficients in twisted representations associated to the pair $(\Phi X;\xi)$ are defined as the connected components of the based spaces
$ {\rm{Hom}}_{\mathcal{O}_G^P} (\Phi X , {\mathfrak{TR}}_n)_{|\mathcal{C}|}$, i.e.
$$\mathbb{H}^{p}_G(\Phi X,\xi) := \pi_0 \left(  {\rm{Hom}}_{\mathcal{O}_G^P} (\Phi X , {\mathfrak{TR}}_p)_{|\mathcal{C}|}; \sigma_p \circ \xi \right).$$
 \end{definition}

Alternatively, in the category of $\mathcal{O}_G^P$-spectra over 
$|\mathcal{C}| $ we have:
$$\mathbb{H}^{p}_G(\Phi X,\xi) := \pi_p \left(  {\rm{Hom}}_{\mathcal{O}_G^P} (\Sigma^\infty \Phi X , {\mathfrak{TR}})_{|\mathcal{C}|}; \sigma \circ \xi \right).$$

These cohomology groups satisfy the axioms of a parametrized $G$-equivariant cohomology theory and the proof follows the same lines
as the one for the twisted equivariant K-theory groups which can be found in \cite[Chapter 5]{barcenasespinozajoachimuribe};  we will not reproduce its proof here.

\begin{remark}Other approaches  to Bredon  cohomology  with  local  coefficients  include  \cite{basusen},  where methods  from  the theory of  crossed  complexes  and  their  classifying  spaces are  used to produce a  classifying  object  for  Bredon  cohomology  with local  coefficients.\end{remark}

\section{Segal's spectral sequence for twisted equivariant K-theory}\label{sectionspectral}
We  will  use  Segal's method   \cite{Segal}  to  obtain   a  filtration  of  the  homotopy  theoretically  defined  twisted  equivariant  K-theory,  as  well  as  a version  of  Bredon cohomology  associated  to  a  cover  to  handle   the   homotopical version  of  Bredon cohomology  described  in  the  previous section. We describe first the local coefficient system associated to twisted equivariant K-theory.

\begin{definition}[Local  coefficient  system of  twisted  equivariant K-theory]
Consider  a projective  unitary  stable  bundle  $P$ over a  proper  $G$-space  $X$ and a $G$-invariant and countable cover $\mathcal{U}$ for which each open set $U_\sigma$ is equivariantly contractible, i.e. $G$-homotopic to $G/H_\sigma$ for some  finite subgroup $H_\sigma$ depending on the set $U_\sigma$.  We can define   local coefficient  systems by the functors
\begin{align*}
\mathcal{K}^p_G( ?,P_?):\cover &\to \mathbb{Z}- {\rm  Mod}\\
U_\tau \subset U_\sigma & \mapsto K^{p}_G(U_\sigma;P|_{U_\sigma}) \to  K^{p}_G(U_\tau;P|_{U_\tau})
\end{align*}
 \end{definition}

\begin{proposition} \label{proposition spectral sequence  twisted}
Let  $X$  be  a  proper compact $G$-ANR and $P$ a projective  unitary  stable equivariant  bundle. Then Segal's spectral sequence applied 
to $\ktheory{G}{*}{X}$ and associated to the locally finite and equivariantly contractible cover $\mathcal{U}$, has as second page $E_2^{p,q}$ the cohomology of $\cover$
with coefficients in the functor $\mathcal{K}^0_G( ?,P|_?)$ whenever $q$ is even, i.e.
\begin{equation}\label{spseq}
E_{2}^{p,q}:= 
H^{p}_G(X,\mathcal{U}; \mathcal{K}^0_G( ?,P|_{?})) 
\end{equation}
and is trivial if $q$ is odd.
Its higher differentials
$$d_{r}:E_{r}^{p, q}\to  E_{r}^{p+r, q-r+1}$$
vanish  for    $r$ even.

\end{proposition}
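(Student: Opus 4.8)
The plan is to set up Segal's spectral sequence in its standard form and then identify the $E_1$- and $E_2$-pages by using the equivariant contractibility of the cover. Recall that Segal's construction, applied to the homotopy-theoretically defined functor $X \mapsto \Gamma(X;\Fred(P))^G$, produces from the cover $\mathcal{U} = \{U_i\}_{i \in \Sigma}$ a filtration of the $G$-space of sections obtained by restricting to the skeleta of the nerve of $\mathcal{U}$; the associated exact couple has $E_1$-term built from the twisted equivariant K-theory of the open sets $U_\sigma$ indexed by the simplices of the nerve. Concretely, $E_1^{p,q} \cong \prod_{|\sigma| = p} K_G^{q}(U_\sigma; P|_{U_\sigma})$ with the first differential $d_1$ given by the alternating sum of the restriction maps coming from inclusions $U_\sigma \hookrightarrow U_\tau$. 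This is exactly the \v{C}ech-style cochain complex computing $H^p$ of the category $\cover$ with coefficients in the functor $\mathcal{K}^q_G(?,P|_?)$. (Here one must use that $X$ is compact so that the cover may be taken locally finite and the nerve is finite-dimensional, guaranteeing convergence.)

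First I would make precise the identification of the $E_1$-page: Segal's filtration spectral sequence for a cover has $E_1$-term the (normalized or unnormalized) \v{C}ech complex with values in the presheaf of homotopy groups of the sections functor, and I would cite \cite{Segal} together with the fact that twisted equivariant K-theory is a $G$-equivariant cohomology theory on proper $G$-ANRs (established via Bott periodicity in \cite[Chapter 5]{barcenasespinozajoachimuribe}, recalled in Section \ref{sectionktheory}) so that the Mayer--Vietoris / restriction machinery applies. Second, I would invoke the hypothesis that each $U_\sigma$ is $G$-homotopy equivalent to an orbit $G/H_\sigma$ with $H_\sigma$ finite. Combined with Bott periodicity $\Omega^2 \Fred(\calh)^K \simeq \Fred(\calh)^K$, this forces $K_G^q(U_\sigma; P|_{U_\sigma})$ to depend only on the parity of $q$, and in particular the groups for $q$ odd and $q$ even are each $2$-periodic in $q$. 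Third, I would observe that by definition of Segal's exact couple the $d_1$ differential on the $q$-even row is precisely the cochain differential of $\Hom_{\IZ\cover}(C_*^{\IZ}(\cover), \mathcal{K}^0_G(?,P|_?))$, so taking cohomology in the $p$-direction yields $E_2^{p,q} = H^p_G(X,\mathcal{U}; \mathcal{K}^0_G(?,P|_?))$ for $q$ even and $E_2^{p,q}=0$ for $q$ odd, which is \eqref{spseq}.

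For the last assertion that $d_r = 0$ for $r$ even, the argument is a checkerboard/parity argument: since the $E_2$-page is concentrated in even total rows $q$, and more generally (by the $q \mapsto q+2$ periodicity) the spectral sequence is $2$-periodic in the $q$-direction with $E_2^{p,q}=0$ for $q$ odd, any nonzero entry $E_r^{p,q}$ has $q$ even. The differential $d_r \colon E_r^{p,q} \to E_r^{p+r,q-r+1}$ shifts $q$ by $-r+1$; if $r$ is even then $-r+1$ is odd, so $d_r$ maps an even row into an odd row, where the group vanishes, hence $d_r = 0$. I would need to check that the vanishing of odd rows persists from $E_2$ to all later pages, but this is immediate since subquotients of zero groups are zero.

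The main obstacle I expect is the bookkeeping in the first step: pinning down exactly which version of Segal's spectral sequence is being used and verifying that its $E_1$-differential agrees on the nose with the differential of the chain complex $C_*^{\IZ}(\cover)$ defining $H^n(\cover,M)$ in Section \ref{sectionbredon}, rather than merely with an abstract \v{C}ech complex. This requires unwinding the definition of $\B\mathcal{C}\downarrow c$ for $\mathcal{C}=\cover$ and matching its cellular chains with the simplices of the nerve of $\mathcal{U}$; the identification is natural but needs the combinatorial comparison between the classifying space of the over-category and the nerve of the poset of simplices of $\mathcal{U}$. Everything else — the periodicity input and the parity argument for even differentials — is formal once this identification is in place.
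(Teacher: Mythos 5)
Your overall route is the same as the paper's: identify the $E_1$-page of Segal's spectral sequence for the cover with the cochain complex computing $H^*(\cover,-)$, use equivariant contractibility of the sets $U_\sigma$ to reduce the coefficient functor to degree zero, and kill the even differentials by a parity argument. But there is one genuine jump. In your second step, Bott periodicity only gives you that $K_G^q(U_\sigma;P|_{U_\sigma})$ depends on the parity of $q$; it does not give the vanishing of the odd-degree groups, which you then simply assert when you write $E_2^{p,q}=0$ for $q$ odd. That vanishing is exactly what has to be supplied: it is part of the statement being proved (``trivial if $q$ is odd''), and your checkerboard argument for $d_r=0$ with $r$ even collapses without it, since periodicity alone would a priori allow a second nonzero family of rows in odd degrees into which the even differentials could map nontrivially.

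The missing input is the computation of the twisted equivariant K-theory of an orbit: since $U_\sigma$ is $G$-homotopy equivalent to $G/H_\sigma$ with $H_\sigma$ finite, the $G$-invariant section space of $\Fred(P)|_{U_\sigma}$ has the homotopy type of a space $\Fred(\calh)^{\widetilde{\alpha}}$ of Fredholm operators commuting with a stable $\widetilde{H}_\sigma$-action, whose $\pi_0$ is $R_{S^1}(\widetilde{H}_\sigma)$ via the index map as in \eqref{iso-index-on-fiber} and whose $\pi_1$ vanishes (isotypically it is built from pieces of the homotopy type of $\IZ\times BU$); equivalently, $K_G^{\mathrm{odd}}(U_\sigma;P|_{U_\sigma})=0$, as established in \cite{barcenasespinozajoachimuribe}. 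This is precisely the fact the paper's proof invokes when it says the groups are ``periodic and trivial for $q$ odd'' before quoting Segal's original argument for the $E_2$ identification. Once you add this (a one-line citation or the orbit computation), your argument is complete; your explicit parity argument for the even differentials is the standard one and is, if anything, spelled out more carefully than the paper's appeal to Bott's isomorphism, and your concern about matching Segal's $E_1$-differential with the complex $\Hom_{\mathbb{Z}\cover}(C_*^{\mathbb{Z}}(\cover),-)$ is the same bookkeeping the paper disposes of by citing Segal's original proof.
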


\begin{proof}
Since the cover consists of equivariantly contractible spaces we know that the groups $K_G^q(U_\sigma; P|_{U_\sigma})$ are periodic
and trivial for $q$ odd. Therefore the fact that the second page of Segal's spectral sequence is isomorphic to $H^{p}_G(X,\mathcal{U}; \mathcal{K}^q_G( ?,P|_{?}))$
follows directly from Segal's original proof. Bott's isomorphism
implies that $K_G^{2n}(U_\sigma; P|_{U_\sigma}) \cong K_G^0(U_\sigma; P|_{U_\sigma})$ and therefore we have  that the even differentials vanish. \end{proof}

We are mainly interested in the second page of the spectral sequence, and to understand it we need to elaborate on the cohomology of $\cover$ with coefficients in the  functor $\mathcal{K}^0_G( ?,P|_{?}))$ and  compare it with  the  homotopy  theoretic definition  given in  Section \ref{sectionktheory}. This comparison will
be done in the category of $\mathcal{O}_G^P$-spaces over $|\mathcal{C}|$.

\vspace{0.5cm}

We claim the following result:

\begin{theorem} \label{theorem second page is Bredon}
Let $\mathcal{U}$ be a locally finite  cover of $G$-invariant sets of $X$ such that each non-trivial intersection of sets in the cover is equivariantly contractible. Then for any map $\mu: \Phi X \to |{{\mathcal{C}}}|$ the second page of Segal's spectral sequence applied
to the groups
$K_G^*(\Phi X;\mu)$ is isomorphic to the Bredon cohomology groups with local coefficients in twisted representations 
$\mathbb{H}^{p}_G(\Phi X,\mu)$;  i.e.
for $q$ even
$$E_2^{p,q} \cong \mathbb{H}^{p}_G(\Phi X,\mu).$$
\end{theorem}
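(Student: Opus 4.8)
The plan is to identify both sides of the claimed isomorphism as the cohomology of the category $\cover$ with coefficients in a suitable functor, and then to show that the two relevant coefficient functors agree. By Proposition \ref{proposition spectral sequence twisted} (applied in the $\mathcal{O}_G^P$-picture, using that the homotopy-theoretic K-theory groups $K_G^*(\Phi X;\mu)$ restricted to each equivariantly contractible piece are periodic and concentrated in even degrees), the $E_2$-page of Segal's spectral sequence for $K_G^*(\Phi X;\mu)$ in even total degree $q$ is $H^p(\cover, \mathcal{K})$, where $\mathcal{K}$ is the functor sending $U_\sigma$ (with $U_\sigma \simeq_G G/H_\sigma$) to $K^0_{G}(\Phi U_\sigma ; \mu|_{\Phi U_\sigma})$ and a morphism $U_\tau \subset U_\sigma$ to the restriction map. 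So the first task is to compute these local groups: since $U_\sigma$ is $G$-homotopy equivalent to the orbit $G/H_\sigma$, one has $K^0_G(\Phi U_\sigma; \mu|) \cong K^0_{H_\sigma}(\mathrm{pt}; \mu(G/H_\sigma))$, and by the index isomorphism \eqref{iso-index-on-fiber} (equivalently \eqref{fiber_iso_Fred_W}) this is $R_{S^1}(\widetilde{H_\sigma})$, where $\widetilde{H_\sigma}$ is the $S^1$-central extension of $H_\sigma$ classified by the component of $|\mathcal{C}_{G/H_\sigma}|$ hit by $\mu_{G/H_\sigma}$. The functoriality in $\sigma$ must be tracked carefully: restriction of twisted representations along the inclusions, twisted by the relevant change of extension/character data, which is exactly the data encoded in the $\mathcal{O}_G^P$-space ${\mathfrak{TR}}_0$ over $|\mathcal{C}|$.

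The second task is to recognize $\mathbb{H}^p_G(\Phi X,\mu) = \pi_0\big({\rm Hom}_{\mathcal{O}_G^P}(\Phi X, {\mathfrak{TR}}_p)_{|\mathcal{C}|}; \sigma_p \circ \mu\big)$ — or equivalently the $p$-th homotopy group of the mapping spectrum into ${\mathfrak{TR}}$ — also as $H^p(\cover, \mathcal{K})$. The natural way to do this is to run, in the category of $\mathcal{O}_G^P$-spaces over $|\mathcal{C}|$, the same Segal filtration argument: filter $\Phi X$ by the nerve of the cover $\mathcal{U}$ (the skeleta of the simplicial $\mathcal{O}_G^P$-space $[q] \mapsto \bigsqcup_{|\sigma| = q+1} \Phi U_\sigma$ mapping to $\Phi X$), and note that since ${\mathfrak{TR}}$ is an $\Omega$-spectrum of $\mathcal{O}_G^P$-spaces over $|\mathcal{C}|$ whose "homotopy sheaf" on each $U_\sigma$ is precisely the Eilenberg–MacLane object on $R_{S^1}(\widetilde{H_\sigma})$, the associated spectral sequence degenerates: the homotopy groups of ${\rm Hom}_{\mathcal{O}_G^P}(\Phi U_\sigma, {\mathfrak{TR}})_{|\mathcal{C}|}$ are $R_{S^1}(\widetilde{H_\sigma})$ concentrated in degree $0$ (because ${\mathfrak{TR}}_n$ over a contractible-slice piece is a $K(R_{S^1}(\widetilde{H_\sigma}),n)$-bundle over a $G/H_\sigma$-type base, whose fiberwise homotopy is the twisted representation group and nothing else). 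Hence the $E_1$-term of this second spectral sequence is the cochain complex computing $H^*(\cover, \mathcal{K})$, it collapses at $E_2$, and converges to $\mathbb{H}^*_G(\Phi X,\mu)$. Comparing the two descriptions — and checking that the identification of coefficient systems in both cases is the same functor $U_\sigma \mapsto R_{S^1}(\widetilde{H_\sigma})$ with the same restriction maps, via the index map diagram \eqref{diagram:Fred_W_TR} which is compatible with both Segal filtrations — yields the claimed isomorphism $E_2^{p,q} \cong \mathbb{H}^p_G(\Phi X,\mu)$ for $q$ even.

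Concretely I would organize the steps as: (1) set up the nerve/skeletal filtration of $\Phi X$ by the cover $\mathcal{U}$ in $\mathcal{O}_G^P$-spaces over $|\mathcal{C}|$ and recall that Segal's construction produces a spectral sequence both for $K_G^*(\Phi X;\mu)$ and for $\mathbb{H}^*_G(\Phi X;\mu)$ out of this filtration; (2) compute the local coefficients for the K-theory spectral sequence, obtaining $K^0_G(\Phi U_\sigma;\mu|) \cong R_{S^1}(\widetilde{H_\sigma})$ via the index isomorphism, with restriction maps along refinements given by restriction/twisting of twisted representations; (3) compute the local coefficients for the ${\mathfrak{TR}}$-spectral sequence, obtaining the same answer because ${\mathfrak{TR}}$ is an $\Omega$-spectrum whose value over each contractible-slice piece is an Eilenberg–MacLane object on $R_{S^1}(\widetilde{H_\sigma})$; (4) invoke the map of fiber bundles \eqref{diagram:Fred_W_TR} — the fiberwise index map $\Fred(|\mathcal{D}|) \to {\mathfrak{TR}}_0$ over $|\mathcal{C}|$ — to produce a morphism of the two filtered objects inducing, on $E_1$, an isomorphism of coefficient systems hence an isomorphism on $E_2$; (5) conclude $E_2^{p,q} \cong H^p(\cover,\mathcal{K}) \cong \mathbb{H}^p_G(\Phi X,\mu)$ for $q$ even, and note the $q$-odd case is trivial as in Proposition \ref{proposition spectral sequence twisted}.

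The main obstacle I anticipate is step (4): making the comparison genuinely natural at the level of the two Segal filtrations, i.e. showing that the fiberwise index map from the Fredholm $\mathcal{O}_G^P$-bundle to ${\mathfrak{TR}}_0$ (and its $\Omega$-spectrum enhancement ${\mathfrak{TR}}$) induces a map of spectral sequences and not merely an abstract isomorphism of $E_2$-terms. This requires checking that the identification "$\pi_0$ of the fiber $=$ twisted representations" is compatible with the restriction maps $U_\tau \subset U_\sigma$ — that is, that the index map intertwines the change-of-extension and character-twist data that appear when one passes between orbit types $G/H_\sigma$ and $G/H_\tau$ — which is precisely the content of the bundle-map diagrams \eqref{diagram:Fred^K_W_TR_0} and \eqref{diagram:Fred_W_TR}, but needs to be spelled out carefully across the category $\cover$. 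A secondary technical point is justifying the collapse of the ${\mathfrak{TR}}$-spectral sequence at $E_2$, which follows from the $\Omega$-spectrum property together with the fact that ${\mathfrak{TR}}_n$ has no fiberwise homotopy outside the single degree $n$ over each contractible-slice open set — routine once the relevant parametrized Elmendorf-type statement from the appendix is in hand.
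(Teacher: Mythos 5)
Your proposal is correct and follows essentially the same route as the paper: apply Segal's spectral sequence (for the chosen equivariantly contractible cover) both to $K_G^*(\Phi X;\mu)$ and to $\mathbb{H}^*_G(\Phi X,\mu)$, observe that the latter collapses because the coefficients over each $U_\sigma$ are concentrated in degree $0$, and compare the two $E_1$-complexes via the fiberwise index map of diagrams \eqref{diagram:Fred^K_W_TR_0} and \eqref{diagram:Fred_W_TR}, whose compatibility with the restriction maps $U_\tau \subset U_\sigma$ gives an isomorphism of coefficient systems and hence of $E_2$-pages, with Bott periodicity handling all even $q$. The naturality issue you flag in step (4) is exactly what the paper checks with the commutative square of index maps induced by inclusions.
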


    \begin{proof}
   Applying Segal's spectral sequence to $ \mathbb{H}^{p}_G(\Phi X,\mu)$ with the cover $\mathcal{U}$ we get
   that the second page of this spectral sequence is   
   $$\bar{E}_2^{p,q} = H_G^p(\Phi X,\mathcal{U}; \mathbb{H}^q_G(?, \mu|_?)).$$
   Since the open sets $U_\sigma$ are equivariantly contractible we have that for $q\neq 0$
   $$\mathbb{H}^q_G(\Phi U_\sigma, \mu|_{\Phi U_\sigma})) = 0   $$
and therefore $\bar{E}_2^{p,q}=0$ for $q\neq 0$. Therefore the spectral sequence collapses at the second page
   and this page becomes
   
   $$\bar{E}_2^{p,0} = H_G^p(\Phi X,\mathcal{U}; \mathbb{H}^0_G(?, \mu|_?)) \cong    \mathbb{H}^p_G(\Phi X, \mu)    $$
   where $\mathbb{H}^0_G(?, \mu|_?)$ is the local coefficient system defined by \begin{align*}
\mathbb{H}^0_G(?, \mu|_?):\cover &\to \mathbb{Z}- {\rm  Mod}\\
U_\tau \subset U_\sigma & \mapsto \mathbb{H}^0_G(\Phi U_\sigma, \mu|_{\Phi U_\sigma}) \to  \mathbb{H}^0_G(\Phi U_\tau,\mu|_{\Phi U_\tau}).
\end{align*}

Now we need to show that there is a canonical way to assign isomorphisms
  \begin{align} \label{local iso: twisted K with Bredon}
  \phi_\sigma : K_G^0(\Phi U_\sigma;\mu|_{\Phi U_\sigma}) \stackrel{\cong}{\to} \mathbb{H}^0_G(\Phi U_\sigma,\ \mu|_{\Phi U_\sigma})
  \end{align}
  which commute  with the restriction maps on each side; the existence of such isomorphisms would induce a canonical isomorphism
  between the complexes defined in the first page of the spectral sequences
  $$E_1^{p,0} \stackrel{\cong}{\to} \bar{E}_1^{p,0}$$
   and therefore would induce an isomorphism at the second pages
 $$E_2^{p,0} \stackrel{\cong}{\to} \bar{E}_2^{p,0}.$$
  The existence of the isomorphisms described in \eqref{local iso: twisted K with Bredon} and which are compatible with the inclusions, follow from the explicit maps described in diagrams \eqref{diagram:Fred^K_W_TR_0}, \eqref{diagram:Fred_W_TR} and from the isomorphisms of equations \eqref{fiber_iso_Fred_W}.
  Since $U_\sigma$ is equivariantly contractible to a point, by equation \eqref{iso-index-on-fiber} we know that the index map
  \begin{align*}
  {\rm Hom}_{\OO_G^ P}(\Phi U_\sigma, \Fred(|{{\mathcal{D}}}|))_{|{{\mathcal{C}}}|}  &\to
  {\rm Hom}_{\OO_G^ P}(\Phi U_\sigma, {\mathfrak{TR}}_0)_{|{{\mathcal{C}}}|}\\
  f & \mapsto \mathrm{ind} \circ f
  \end{align*}
  induces an isomorphism on connected components, and hence a canonical isomorphism
  \begin{align*}
 \phi_\sigma:K_G^0(\Phi U_\sigma;\mu|_{\Phi U_\sigma})  \stackrel{\cong}{\to}
  \mathbb{H}^0_G(\Phi U_\sigma;\mu|_{\Phi U_\sigma}) .
  \end{align*}
 The inclusion $U_\tau \subset U_\sigma$  induces a commutative diagram
 $$\xymatrix{
  {\rm Hom}_{\OO_G^ P}(\Phi U_\sigma, \Fred(|{{\mathcal{D}}}|))_{|{{\mathcal{C}}}|} \ar[d]\ar[r]^-{\mathrm{ind}} &
  {\rm Hom}_{\OO_G^ P}(\Phi U_\sigma, {\mathfrak{TR}}_0)_{|{{\mathcal{C}}}|} \ar[d]\\
  {\rm Hom}_{\OO_G^ P}(\Phi U_\sigma, \Fred(|{{\mathcal{D}}}|))_{|{{\mathcal{C}}}|}\ar[r]^-{\mathrm{ind}} &
  {\rm Hom}_{\OO_G^ P}(\Phi U_\sigma, {\mathfrak{TR}}_0)_{|{{\mathcal{C}}}|}
 }$$
 which implies that the isomorphisms $\phi_\sigma$ are compatible with restrictions, i.e. we have the commutative diagram
 $$\xymatrix{
  K_G^0(\Phi U_\sigma;\mu|_{\Phi U_\sigma})  \ar[r]^{\phi_\sigma}_\cong \ar[d] &
  \mathbb{H}^0_G(\Phi U_\sigma;\mu|_{\Phi U_\sigma}) 
  \ar[d]\\
  K_G^0(\Phi U_\tau;\mu|_{\Phi U_\tau})  \ar[r]^{\phi_\tau}_\cong  &
  \mathbb{H}^0_G(\Phi U_\tau;\mu|_{\Phi U_\tau}) .
 }$$
 The isomorphisms $\phi_\sigma$ induce the desired isomorphism $E_1^{p,0} \stackrel{\cong}{\to} \bar{E}_1^{p,0}$, and since they are compatible with restrictions, they induced an isomorphism of complexes
 thus preserving the first differential. This implies that 
 $E_2^{p,0} \stackrel{\cong}{\to} \bar{E}_2^{p,0}.$
 Bott periodicity implies that there
 are canonical isomorphisms $E_1^{p,q}\cong E_1^{p,0}$
 for $q$ even,  which are compatible with the restrictions. 
 We conclude that 
 $$E_2^{p,q} \cong \mathbb{H}^p_G(\Phi X;\mu)$$
 whenever $q$ is even and $E_2^{p,q}=0$ whenever $q$ is odd.
  \end{proof}

   \subsection{The third differential}
   The third differential on Segal's spectral sequence 
   $$d_3 : E_2^{p,q} \to E_{2}^{p+3,q-2},$$
   together with the isomorphism of Theorem \ref{theorem second page is Bredon}, 
   induce a degree three map 
   $$d_3 : \mathbb{H}^p_G(\Phi X, \mu) \to \mathbb{H}^{p+3}_G(\Phi X, \mu)$$
   on the Bredon cohomology with local coefficients in twisted representations which we will denote with the same symbol $d_3$.
   
  The purpose of this section is to evidence some  particular phenomena  concerning   this  differential $d_3$. 
  
\subsubsection{$G$-Invariant cohomology class}
  
Consider the trivial subgroup $\{1\} \subset G$ and recall that the bundle $| {{\mathcal{D}}}_{G/\{1\}}| \to | {{\mathcal{C}}}_{G/\{1\}}|$ is a universal projective unitary bundle thus having that $| {{\mathcal{C}}}_{G/\{1\}}|$ is a $K(\IZ,3)$. Hence 
for any map $\mu : \Phi X \to |{{\mathcal{C}}}|$ which classifies a projective equivariant stable unitary bundle over $X$, the map $\mu_{G/\{1\}}: X \to | {{\mathcal{C}}}_{G/\{1\}}|$ encodes the information of the projective unitary bundle once the $G$-action is forgotten.
The map  $\mu_{G/\{1\}}$ defines a degree 3 cohomology
class $\eta \in H^3(X, \IZ)$ which is moreover $G$-invariant.

    In cohomological terms we know that the bundle $P \to X$ is classified by an element $\overline{\eta} \in H^3(X \times_G EG , \mathbb{Z})$.
    Denoting by $\eta$ the restriction of $\overline{\eta}$ to any fiber of the Serre fibration $X \to X \times_G EG \to BG$, and restricting it further
    to the fixed point set of the group $K$, we get a class
    $$\eta_K:= \eta|_{X^K} \in H^3(X^K;\mathbb{Z}).$$
    This class $\eta_{K}$ is precisely the class defined by the  the composition $X^K \stackrel{\mu_{G/K}}{\to} |{{\mathcal{C}}}_{G/K}| \stackrel{\kappa_{G/K}}{\to} |{{\mathcal{C}}}_{G/\{1\}}| $, and it is 
    furthermore $N_G(K)/K$-invariant. 
    
    Since the groups $R_{S^1}(\widetilde{K})$ are free $\IZ$-modules, there is an induced structure at the level of the Eilenberg-MacLane spaces
    $$| {{\mathcal{C}}}_{G/\{1\}}|\times K(R_{S^1}(\widetilde{K}),n) \to K(R_{S^1}(\widetilde{K}),n+3)$$
    which is $N_G(K)/K$-equivariant, compatible with restrictions  and which recovers the cup product by a degree 3 cohomology class. 
    Composing with the canonical maps $\kappa_{G/K}: |{{\mathcal{C}}}_{G/K}| \to |{{\mathcal{C}}}_{G/\{1\}}|$ we obtain maps
   $$\varepsilon_{G/K}:  |{{\mathcal{C}}}_{G/K}| \times K(R_{S^1}(\widetilde{K}),n) \to K(R_{S^1}(\widetilde{K}),n)$$
    which are ${\rm{Hom}}(K,S^1)$ equivariant, and therefore they define maps
    $$({\mathfrak{TR}}_n)_{G/K} \to ({\mathfrak{TR}}_{n+3})_{G/K}$$
    over $|{{\mathcal{C}}}_{G/K}|$ which can be assembled into a map ${\mathfrak{TR}}_n \to {\mathfrak{TR}}_{n+3}$ over $|{\mathcal{C}}|$.
At the level of based maps we have an induced map
\begin{eqnarray*}
{\rm{Hom}}_{\mathcal{O}_G^P} (\Phi X , {\mathfrak{TR}}_n)_{|\mathcal{C}|}  &\to&  {\rm{Hom}}_{\mathcal{O}_G^P} (\Phi X , {\mathfrak{TR}}_{n+3}) _{|\mathcal{C}|} \\
F  &\mapsto& \widetilde{F}
\end{eqnarray*}
   with $\widetilde{F}_{G/K}(x): =\varepsilon_{G/K}(\mu_{G/K} (\kappa_{G/K}(x)), F(x))$, such that it  induces a degree three homomorphism
   $$\eta \cup: \mathbb{H}^n_G(\Phi X, \mu) \to \mathbb{H}^{n+3}_G(\Phi X,\mu).$$
   
   \begin{remark} The procedure described above defines in general a $H^*(X,\mathbb{Z})^G$-module
   structure  on $\mathbb{H}^*_G(\Phi X, \mu)$ by the cup product. Therefore we could say that the degree three homomorphism
   $\eta \cup$ is equivalent to performing the cup product with the class $\eta$.
   
   If the group $G$ is trivial, the class $\eta \in H^3(X,\mathbb{Z})$ classifies the projective unitary bundles over $X$, and it was proven
   by Atiyah and Segal \cite{Atiyah-Segal2} that the third differential of Segal's spectral sequence was equivalent to the homomorphism $Sq^3_{\mathbb{Z}} - \eta \cup$.
  \end{remark}

  \begin{theorem}\label{theoremspectralsequence}
  Consider the Segal's spectral sequence defined in Theorem \ref{theorem second page is Bredon} and the isomorphism of its second page with the Bredon cohomology with coefficients in twisted representations
  $$E_2^{p,q} \cong \mathbb{H}^p_G(\Phi X,\mu)$$
  whenever $q$ is even. Then the third differential of the spectral sequence $d_3 : E_2^{p,q} \to E_2^{p+3,q-2}$ is  a  natural  transformation   in Bredon  cohomology  with  local coefficients in  twisted  representations.

  \end{theorem}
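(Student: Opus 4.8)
The plan is to deduce the naturality of $d_3$ from two ingredients: the functoriality of Segal's spectral sequence with respect to maps of the triple (space, cover, twisting bundle), and the fact that the identification of its second page with $\mathbb{H}^\bullet_G(\Phi X,\mu)$ supplied by Theorem \ref{theorem second page is Bredon} is canonical. Granting both, the differential of a functorial spectral sequence transported across a natural isomorphism is again a natural transformation, which is exactly the assertion. Throughout I regard $\mathbb{H}^p_G(-)$ as a functor on pairs $(\Phi X,\mu)$, a morphism being a $G$-map $f:X\to Y$ of proper compact $G$-ANR's together with the induced $\mu_X:=\mu_Y\circ\Phi f$ and twist $P_X:=f^*P_Y$.

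First I would make the functoriality precise. Starting from a locally finite $G$-invariant cover $\mathcal{V}=\{V_j\}$ of $Y$ whose non-empty intersections are equivariantly contractible, the preimages $\{f^{-1}(V_j)\}$ cover $X$; refining them (possible by the slice theorem, \cite{antonyan}) yields a locally finite cover $\mathcal{U}=\{U_i\}$ of $X$ with equivariantly contractible non-empty intersections together with a refinement function sending each $U_i$ into some $f^{-1}(V_{\rho(i)})$, hence each $U_\sigma$ into $f^{-1}(V_{\rho(\sigma)})$ with $\sigma\subset\tau$ forcing $V_{\rho(\tau)}\subset V_{\rho(\sigma)}$. This produces a functor $\cover\to\mathcal{N}_G\mathcal{V}$ covering $f$, along which the K-theory coefficient systems are compatible: the composite $K_G^0(V_{\rho(\sigma)};P_Y|_{V_{\rho(\sigma)}})\to K_G^0(U_\sigma;P_X|_{U_\sigma})$ (pull back and restrict) is a natural transformation of coefficient systems over this functor. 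Since the $E_1$-page of Segal's spectral sequence is assembled from the K-theory of the pieces $U_\sigma$ and all higher structure is dictated by the combinatorics of the nerve, this data induces a morphism of spectral sequences $E_r^{*,*}(Y,\mathcal{V},P_Y)\to E_r^{*,*}(X,\mathcal{U},P_X)$ commuting with every differential, in particular with $d_3$. Applying the same observation to two admissible covers of a single space joined through a common admissible refinement also shows that $d_3$ on the second page is independent of the chosen cover, which is what lets one speak of $d_3$ on the intrinsically defined groups $\mathbb{H}^\bullet$.

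Next I would check that the isomorphism of Theorem \ref{theorem second page is Bredon} respects this functoriality. In that proof the isomorphism $E_2^{p,0}\cong\mathbb{H}^p_G(\Phi X,\mu)$ is induced, cochain by cochain, by the isomorphisms $\phi_\sigma:K_G^0(\Phi U_\sigma;\mu|_{\Phi U_\sigma})\stackrel{\cong}{\to}\mathbb{H}^0_G(\Phi U_\sigma;\mu|_{\Phi U_\sigma})$, each obtained by post-composing a section with the canonical fiberwise index map $\mathrm{ind}:\Fred(|\mathcal{D}|)\to\mathfrak{TR}_0$ of diagram \eqref{diagram:Fred_W_TR}. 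This map is defined once and for all in the category of $\OO_G^P$-spaces over $|\mathcal{C}|$, with no reference to $X$ or to $\mathcal{U}$, so $\phi_\sigma$ is natural not only for restrictions along inclusions $U_\tau\subset U_\sigma$ (as already used there) but also for pullback along $\Phi f$: pulling back a section is pre-composition with $\Phi f$, which commutes with post-composition by $\mathrm{ind}$, and it carries the base point $\sigma_0\circ\mu_Y$ to $\sigma_0\circ\mu_X$. Hence the cochain-level isomorphism $E_1^{*,0}\cong\bar E_1^{*,0}$, and therefore $E_2^{*,0}\cong\bar E_2^{*,0}$, is compatible with the morphism of spectral sequences built above, and via Bott periodicity the same holds in every even row $q$.

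Combining the two steps gives, for every $f:X\to Y$ as above, a commutative square whose vertical arrows are the pullback maps $\mathbb{H}^p_G(\Phi Y,\mu_Y)\to\mathbb{H}^p_G(\Phi X,\mu_X)$ and $\mathbb{H}^{p+3}_G(\Phi Y,\mu_Y)\to\mathbb{H}^{p+3}_G(\Phi X,\mu_X)$ and whose horizontal arrows are $d_3$; this is precisely the naturality claimed. The hard part is the interface between the cover-dependent construction of Segal's spectral sequence and the cover-free definition of $\mathbb{H}^\bullet$: I would have to verify that refinement maps are isomorphisms on the whole second page (not merely on $E_2^{*,0}$), that admissible common refinements always exist (again by the slice theorem), and that the base points $\sigma_p\circ\mu$ are matched at every stage. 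Once this bookkeeping is done, the naturality of $d_3$ is formal.
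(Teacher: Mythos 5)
Your argument is essentially correct, but it takes a genuinely different route from the paper's, and it proves a different portion of the claim. The paper's proof is a two-line appeal to Brown representability in the parametrized equivariant setting (Section \ref{subsection Brown representability} of the Appendix): it \emph{takes as given} that $d_3$, transported through the identification of Theorem \ref{theorem second page is Bredon}, is a homomorphism $\mathbb{H}^p_G(\Phi X,\mu)\to\mathbb{H}^{p+3}_G(\Phi X,\mu)$ which is functorial and depends only on $\mu:\Phi X\to|\mathcal{C}|$, and then concludes that the operation is induced by a map ${\mathfrak{TR}}_p\to{\mathfrak{TR}}_{p+3}$ of $\OO_G^P$-spaces over $|\mathcal{C}|$. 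You instead prove the functoriality itself: naturality of Segal's spectral sequence under pullback of admissible covers and twists along $f:X\to Y$, cover-independence via common refinements, and compatibility of the index-map isomorphisms $\phi_\sigma$ with precomposition by $\Phi f$ — precisely the clause the paper asserts without argument, at the price of the \v{C}ech-style bookkeeping you flag (refinement maps inducing isomorphisms on all of $E_2$, existence of admissible common refinements, matching of base points), all of which is routine given Theorem \ref{theorem second page is Bredon}. What your route does not yet deliver is the statement the paper actually extracts and uses right after the theorem, namely that the operation is \emph{represented} by a map ${\mathfrak{TR}}_p\to{\mathfrak{TR}}_{p+3}$ over $|\mathcal{C}|$ (this is what feeds the subsequent comparison with $\mathrm{Sq}^3$ and $\eta\,\cup$): to obtain it from your natural transformation you would still invoke the Brown representability/Yoneda argument of the Appendix, together with a remark that your transformation, a priori defined only on compact proper $G$-ANRs admitting admissible covers, extends to the relevant homotopy category. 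So read literally, the theorem follows from your argument; read as the paper uses it, your argument supplies the hypothesis of the paper's proof and the short representability step remains to be added.
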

  
\begin{proof}

The result follows from Brown's representability theorem  (see section \ref{subsection Brown representability}  in the Appendix  for  a discussion of  Brown  representability in the parametrized  and  equivariant  setting). 
Since the  third differential is a homomorphism
$$ \mathbb{H}^p_G(\Phi X, \mu )  \to  \mathbb{H}^{p+3}_G(\Phi X,\mu)$$
which is functorial and only depends on the map $\mu:\Phi X \to |\mathcal{C}|$,
the  third  differential  is  thus given  by  
 a map of ${\mathfrak{TR}}_p\to {\mathfrak{TR}}_{p+3}$ of $\OO_G^ P$-spaces over $|{\mathcal{C}} |$. 
 \end{proof}
 Note that a  map
 from $({\mathfrak{TR}}_p)_{G/K}\to ({\mathfrak{TR}}_{p+3})_{G/K}$ over $|{{\mathcal{C}}}_{G/K}|$ is determined
 by a ${\rm{Hom}}(K,S^1)$-equivariant map
 $$\overline{| {{\mathcal{C}}}_{G/K}|_{\widetilde{K}}}\times K(R_{S^1}(\widetilde{K}),n) \to K(R_{S^1}(\widetilde{K}),n+3).$$
 The assembly of these maps produces a map ${\mathfrak{TR}}_p\to {\mathfrak{TR}}_{p+3}$ of $\OO_G^ P$-spaces over $|{\mathcal{C}} |$.

 In the case that the  acting  group is  trivial, Atiyah and Segal have proved \cite[Prop. 4.6]{Atiyah-Segal2} that the map of Eilenberg-MacLane spaces
$$K(\mathbb{Z},3) \times K(\mathbb{Z},p) \to K(\mathbb{Z},p+3)$$
is given by the operation $(\eta, b) \mapsto Sq^3_{\mathbb{Z}}b - \eta\cup b$. 

Equivariantly, the  situation  is  much  more  involved. A  complete  description  of  natural  transformations   in  Bredon  cohomology  with  local  coefficients  is  not  available  in the  literature.

Even  untwisted, the expression  for  the  third  differential  in Bredon  cohomology  turns  out  to  be considerably  different.    One  could  expect  that a version  of  Steenrod  cubes  defined  as  follows  should  cover  the  natural  transformations.

 For any $S^1$ central extension $\widetilde{K}$, the group of twisted representations $R_{S^1}(\widetilde{K})$ is a free $\mathbb{Z}$-module generated by the irreducible representations of $\widetilde{K}$ on which  $S^ {1}$  acts  by  scalar  multiplication. Therefore we have the short exact sequence of coefficients
  \begin{equation} \label{short exact sequence of coefficients} 0 \to R_{S^1}(\widetilde{K}) \stackrel{\times 2}{\longrightarrow} R_{S^1}(\widetilde{K})\stackrel{{\rm{mod}}_2}{\longrightarrow} R_{S^1}(\widetilde{K}) \otimes_{\mathbb{Z}} \mathbb{Z}/2 \to 0 \end{equation}
  and we can consider the composition of maps of $\Omega$-spectra

  $$HR_{S^1}(\widetilde{K}) \stackrel{{\rm{mod}}_2}{\longrightarrow} H (R_{S^1}(\widetilde{K})\otimes_{\mathbb{Z}} \mathbb{Z}/2 )
 \stackrel{{\rm{Sq}}^2}{\longrightarrow} \Sigma^2 H(R_{S^1}(\widetilde{K})\otimes_{\mathbb{Z}} \mathbb{Z}/2) \stackrel{\beta}{\longrightarrow} \Sigma^3 HR_{S^1}(\widetilde{K})$$
  where the first map is the reduction modulo 2 map, the second is the Steenrod square defined over each $\mathbb{Z}/2$-module
  generated by irreducible representations, and the third map is the Bockstein map induced by the short exact sequence of \eqref{short exact sequence of coefficients}.
  
  Denote the composition 
  $${\rm{Sq}}^3_{\widetilde{K}}= \beta \circ {\rm{Sq}}^2 \circ {\rm{mod}}_2 : HR_{S^1}(\widetilde{K}) \to \Sigma^3 HR_{S^1}(\widetilde{K})$$
  and note that it is compatible with the $N_G(K)/K$-action on $HR_{S^1}(\widetilde{K})$ and with the restriction maps.
 At the level of the $\mathcal{O}_G^P$-spaces over $|{\mathcal{C}}|$
we see that the maps ${\rm{Sq^3_{\widetilde{K}}}}$ induce maps 
   $$\xymatrix{ \bigsqcup_{\widetilde{K} \in {\rm{Ext}}(K,S^1)} \overline{|{{\mathcal{C}}}_{G/K}|_{\widetilde{K}}} \times_{{\rm{Hom}}(K,S^1)} K(R_{S^1}(\widetilde{K}),n) \ar[d]^{{\rm{Sq^3_{\widetilde{K}}}}}\\
  \bigsqcup_{\widetilde{K} \in {\rm{Ext}}(K,S^1)} \overline{|{{\mathcal{C}}}_{G/K}|_{\widetilde{K}}} \times_{{\rm{Hom}}(K,S^1)}K(R_{S^1}(\widetilde{K}),n+3)}$$
  which can be assembled into a map that we denote
  $$\mathrm{Sq}^3: {\mathfrak{TR}_n} \to {\mathfrak{TR}_{n+3}},$$
  which furthermore assembles into a map of $\mathcal{O}_G^P$-spectra over $|{{\mathcal{C}}}|$ which we denote
  $${\rm{Sq^3}}: {\mathfrak{TR}} \to \Sigma^3 {\mathfrak{TR}}.$$

At the level of  Bredon cohomology with local coefficients in twisted representations, the map ${\rm{Sq^3}}$ induces a degree
three homomorphism
\begin{align}\label{def-Sq3}
{\rm{Sq^3}}: \mathbb{H}^p_G(\Phi X, \mu) \to \mathbb{H}^{p+3}_G(\Phi X,\mu)
\end{align}
which will be denoted by the same symbol in order to simplify the notation.

The Steenrod  cube  over  twisted  representation vanishes  on zero dimensional  Bredon  cohomology  classes. The coincidence  of  the  third  differential   for  the  spectral  sequence  with  this  cohomology  operation  would  imply  that the  edge  homomorphism  

$$K^0_G(X) \to E_\infty^{0,2r} \to E_2^{0,2r}\cong H^0_G(X;R(?))$$
is  surjective. 
However, evidence  in  specific  computations \cite[Ex.  5.2 pp. 614]{lueckolivervectorbundles} and \cite[Lemma  3.3  pp. 6]{degrijselearyvectorbundles} shows  that  this  is  not  the  case. The  first author thanks Dieter  Degrijse  and Justin  Noel  for  conversations  on  this  issue leading  to  a  precision  on  the first  version  of  this  note.

\section{Applications}\label{section_applications}

\subsection{Equivariant K-theory}
 When Segal's spectral sequence is applied to non-twisted equivariant K-theory, it is known that the second page of the spectral sequence is isomorphic to the Bredon cohomology with coefficients in representations
$$E_2^{p,q} = \mathbb{H}_G^p(X, \mathcal{R}( ?))$$
where $\mathcal{R}(G/K)=R(K)$ is the representation  ring  of $K$. 

\subsection{The case of $\eta=0$}
If the restriction of the class $\overline{\eta} \in H^3(X \times_G EG; \mathbb{Z})$ to $H^3(X; \mathbb{Z})$ is zero, then we have that all the higher differentials of Segal's spectral sequence vanish if we tensor the spectral sequence with the rationals. This follows from the fact that the operations on the Eilenberg-MacLane spectrum are all torsion operations. In this case Segal's spectral sequence tensored with the rationals collapses at the second page, and therefore the twisted equivariant K-theory is isomorphic to the Bredon cohomology with local coefficients in twisted representations after tensoring both cohomology groups with the rationals.

\subsection{Twisted equivariant  K-theory  for  trivial $G$-spheres}
We know from \cite[Theorem 4.8]{barcenasespinozajoachimuribe} that the twistings are classified by $H^3(X\times_GEG ;\IZ)$. In the case that $X$ is a trivial $G$-space, we have that the  group $G$  is  finite and the Borel cohomology  group  satisfies
$$H^3(X\times_GEG;\IZ)\cong H^3(X\times BG;\IZ),$$
and if $X$ has torsion free integral cohomology, by the K\"unneth isomorphism we obtain
$$H^3(X\times BG;\IZ)\cong \bigoplus_{i=0}^3 H^i(X;\IZ)\otimes H^{3-i}(BG;\IZ).$$

In the case that  $X=S^1$, given $$\alpha = [P] \in H^3(S^1 \times_G EG;\IZ)\cong H^2(BG;\IZ)\oplus H^3(BG;\IZ),$$ the class $\alpha$ can be decomposed as $\alpha=\gamma \oplus \beta$, with
$\gamma\in H^2(BG;\IZ)\cong {\rm{Hom}}(G,S^1)$ and $\beta\in H^3(BG;\IZ)\cong \mathrm{Ext}(G,S^1)$. To the homomorphism 
$\gamma : G \to S^1$ one can associate the linear 1-dimensional representation $\rho_\gamma$, and let 
$1\to S^1\to \widetilde{G}\to G \to 1 $ be  the $S^1$-central  extension of $G$ associated  to  $\beta$.

If $U$ and $V$ are two open contractible subsets of $S^1$, with $U\cup V = S^1$ and $U\cap V \simeq S^0$, then the Mayer-Vietoris sequence for $ K_G^*(S^1;P)$ is given by the following six-terms exact sequence
$$\xymatrix{K_G^0(S^1;P) \ar[r] & K_G^0(U;P|_U)\oplus K_G^0(V;P|_V) \ar[r] & K_G^0(U \cap V;P|_{U\cap V}) \ar[d] \\ K_G^1(U \cap V;P|_{U\cap V}) \ar[u] & K_G^1(U;P|_U)\oplus K_G^1(V;P|_V) \ar[l] & K_G^1(S^1;P). \ar[l] }$$
On the other hand (cf. \cite[Section 5.3.4]{barcenasespinozajoachimuribe}), the isomorphisms
 $K_G^0(U;P|_U)\cong R_{S^1}(\widetilde{G}) \cong K_G^0(V;P|_V)$ and $K_G^0(U \cap V;P|_{U\cap V}) \cong
  R_{S^1}(\widetilde{G}) \oplus R_{S^1}(\widetilde{G})$ fit in the following commutative diagram 
$$\xymatrix{ K_G^0(U;P|_U)\oplus K_G^0(V;P|_V) \ar[d]^\cong \ar[r] & K_G^0(U \cap V;P|_{U\cap V}) \ar[d]^\cong \\
 R_{S^1}(\widetilde{G}) \oplus  R_{S^1}(\widetilde{G}) \ar[r]^-{j^*} &  R_{S^1}(\widetilde{G}) \oplus R_{S^1}(\widetilde{G})},$$
where the bottom morphism $j^*: (a,b) \mapsto (a-b,a-\rho_\gamma \cdot b)$ is induced by the inclusions $U\cap V \hookrightarrow U$ and $U\cap V \hookrightarrow V$;  thus we obtain the exact sequence
$$\xymatrix{
0 \ar[r] & K_G^0(S^1;P) \ar[r] &  R_{S^1}(\widetilde{G}) \ar[r]^{\times(1-\rho_\gamma)} & R_{S^1}(\widetilde{G}) \ar[r] & K_G^1(S^1;P) \ar[r] & 0}$$
which implies that the K-theory groups are respectively the invariants
and the coinvariants of the operator $\rho_\gamma$, i.e.
$$K_G^0 (S^1;P) \cong 
R_{S^1}(\widetilde{G})^{\rho_\gamma}  \hspace{0.5cm} \text{and}  \hspace{0.5cm} K_G^1 (S^1;P) \cong R_{S^1}(\widetilde{G}) / (1-\rho_\gamma) R_{S^1}(\widetilde{G}).$$

For the 2-dimensional sphere, the Borel cohomology is given by $$H^3(S^2\times_G EG ; \mathbb{Z}) \cong H^3(BG;\mathbb{Z})$$ by K\"unneth formula and the fact $H^2(S^2;\mathbb{Z}) \otimes H^1(BG;\mathbb{Z})=0$, since $G$ is finite. So, in this case there is only discrete torsion and $$K_G^*(S^2;P) \cong K^*(S^2) \otimes R_{S^1} (\widetilde{G}).$$
where $\widetilde{G}$ is the $S^1$-central extension associated to $[P]$.

For $X=S^3$ with a trivial $G$-action, we have 
$$H^{3} (S^{3} \times_G EG ;\mathbb Z) \cong H^{3}(S^{3};\mathbb Z) \oplus H^{3}(BG;\mathbb Z),$$
thus every cohomology class $\alpha \in H^{3}_G (S^{3} ;\mathbb Z)$ can be decomposed as  $n\gamma \oplus \beta$, where $\gamma \in H^{3}(S^{3};\mathbb Z)$ is the generator
and $\beta \in H^{3}(BG;\mathbb Z)$. Take a projective unitary stable bundle $P$ over $S^3$ which is classified by the class 
$n\gamma \oplus \beta$. Then in this case the second page of Segal's spectral sequence is isomorphic to 
$$H^*(S^3) \otimes_{\mathbb{Z}} R_{S^1}(\widetilde{G})$$
and the third differential is given by cupping with the class $n \gamma \otimes 1$. Therefore we get that for $n \neq 0$
$$K_G^0 (S^3;P)=0   \hspace{0.5cm} \text{and}  \hspace{0.5cm} K_G^1 (S^3;P) \cong \mathbb{Z}/n \otimes_{\mathbb{Z}} R_{S^1}(\widetilde{G}).$$

\subsection{Discrete torsion} One of the first versions of twisted equivariant K-theory were defined with the information
of a 2-cocycle $Z^2(G, S^1)$ whenever the group was finite (see \cite{Witten,LupercioUribe} and references therein); these cocycles were called {\it discrete torsion}. Using the fact that the group $H^2(G, S^1)$ classifies isomorphism classes of $S^1$-central extensions of the group $G$, this
definition of the twisted equivariant K-theory was generalized to the context of proper and discrete actions in \cite{Dwyer},  under   the  additional  hypothesis that  the  class  $\eta \in H^2(G, S^1)$ is  a  finite  order  element.
With our setup we can recover the twisted equivariant K-theory groups associated to discrete torsion, as  well as  the spectral sequence developed in \cite{Dwyer}. 

Let $G$ be a countable discrete group and let $1 \to S^1 \to \widetilde{G} \to G \to 1$ be a $S^1$-central extension of $G$ which is classified by the cohomology class $\alpha \in H^2(G,S^1)$.
Consider $L^2(\widetilde{G})$, the square integrable complex functions on $\widetilde{G}$, and endow it with the natural $\widetilde{G}$-action given by composition
$(g \cdot f)(h):= f(hg^{-1}).$
Let $$V(\widetilde{G}) := \{ f \in L^2(\widetilde{G}) \mid f(hx) =f(h)x \text{ for all }h \in \widetilde{G} \text{ and } x \in S^1\}$$
be the subspace on which $S^1$ acts by multiplication and let $\mathcal{H}: =V(\widetilde{G}) \otimes L^2([0,1])$
be the $\widetilde{G}$-Hilbert space on which kernel $\widetilde{G} \to G$ acts also by multiplication. Let
 $\mathcal{U}( \mathcal{H})$ be the group of unitary operators on $\mathcal{H}$ and note that
the $\widetilde{G}$-action on $V(\widetilde{G})$ defines a homomorphism
$$\widetilde{\rho}: \widetilde{G} \to \mathcal{U}( \mathcal{H})$$
whose projectivisation $\rho : G \to P\mathcal{U}( \mathcal{H})$ makes the following diagram commutative
$$\xymatrix{\widetilde{G} \ar[r]^-{\widetilde{\rho}} \ar[d] & \mathcal{U}(\mathcal{H}) \ar[d] \\
G \ar[r]^-{\rho} & P\mathcal{U}( \mathcal{H}).
}$$

For every orbit type $G/K$ with $K$ finite, define the functor
$$\rho_{G/K}:=\Funct_{st} (G \ltimes G/K, P \mathcal{U}( \mathcal{H}))$$
by the equation $\rho_{G/K}(g,h[K]) := \rho(g)$, and note that this assignment is functorial since any $G$-equivariant map $\psi: G/K \to G/H$ induces a functor $G \rtimes G/K \to G \rtimes G/H, (g,h[K]) \mapsto (g, \psi(h[K]))$, and therefore the first coordinate stays fixed.
Moreover, since $L^2(\widetilde{K}) \subset L^2(\widetilde{G})$, where $\widetilde{K}$ denotes the $S^1$-central extension of $K$ induced by $\widetilde{G}$ and the inclusion $K \subset G$, then we know by Peter-Weyl's theorem that $\calh$ includes all irreducible representations of $\widetilde{K}$ on which the circle acts by multiplication, an infinitely number of times; therefore we know that $\rho_{G/K}$ is a stable functor, since its restriction to the group $K$ is a stable homomorphism (see Definition \ref{def:proj_equi-bundle}),  and therefore it defines a point in $|\mathcal{C}_{G/K}|$.

For every proper $G$-CW-complex $X$ we can associate the map of $\mathcal{O}_G^P$-spaces
$$\overline{\rho}^X : \Phi X \to |\mathcal{C}|$$
such that for every orbit type we get the constant map
$$\overline{\rho}_{G/K}^X : X^K \mapsto |\mathcal{C}_{G/K}|, \hspace{0.5cm} x \mapsto \rho_{G/K}.$$

In this way we get that 
the twisted $G$-equivariant K-theory groups $K_G^{*}(\Phi X; \overline{\rho}^X)$ realize the twisted $G$-equivariant K-theory groups
associated to the $S^1$-central extension $\widetilde{G}$ defined by Dwyer  in \cite{Dwyer}. Now, since the map $ \overline{\rho}^X$
is constant on each orbit type and only depends on the central extension $\widetilde{G}$ defined by $\alpha$, we could define the  
contravariant $\mathcal{O}_G^P$-module 
$\mathcal{R}_{\alpha}(?) $ with $\mathcal{R}_{\alpha}(G/K):=R_{S^1}(\widetilde{K}) $ thus obtaining a canonical isomorphism
$$\mathbb{H}^*_G(\Phi X,\overline{\rho}^X )\cong  \check{H}^{*}_G(X;\mathcal{R}_{\alpha}(?) )
$$
between the Bredon cohomology of the map $ \overline{\rho}^X$ and the Bredon cohomology with coefficients in the twisted representations $\mathcal{R}_{\alpha}(?)$.

The groups $\check{H}^{*}_G(X;\mathcal{R}_{\alpha}(?) )$ are the ones shown in \cite{Dwyer} to be isomorphic to the second page 
of the Atiyah-Hirzebruch spectral sequence that converges to the twisted equivariant K-theory groups $K_G^{*}(\Phi X; \overline{\rho}^X)$.

The methods developed in the present work have been successfully applied in \cite{barcenasvelasquez} for the
explicit calculation of the  twisted  $Sl_{3}(\mathbb{Z})$-equivariant  $K$-theory   and  $K$-homology of the space $\eub{Sl_{3}(\mathbb{Z})}$. In this case, the calculations are done using an universal  coefficients  theorem  for  $\alpha$-twisted  Bredon  cohomology, and  the fact that   the  spectral  sequence  constructed  in this work collapses at the second page.

\section{Appendix: Brown representability} \label{Appendix:brown_rep}

The content of this appendix is based on Chapter 7 of \cite{maysigurdsson}. We assume the reader is familiar  with the  $qf$-model category structure defined in \cite[Section 6.2]{maysigurdsson}. 

\subsection{Based $G$-CW-complexes} Let $ B$ denote a fixed proper $G$-CW-complex. A based proper $G$-CW-complex is a pair $(X;x)$ with $X$ a $G$-CW-complex, $X-\{x\}$ a proper $G$-CW-complex and $x$ a $G$-fixed point.

A based $G$-space over $ B$ is a triple $X=(X,p,s)$ where $p:X\to  B$ and $s: B \to X$ are $G$-maps and $p\circ s = \id_B$. A map $X\to X'$ of based $G$-spaces over $ B$ is a map of based $G$-spaces that commute with projections and sections. We denote the space of such maps by $\Hom_{G,B}^0(X,X')$ and by $_G[X,X']_{B}^0$ the corresponding set of homotopy classes.

Let $(X,p)$ be a $G$-space over $ B$. We use the notation $(X,p)_+$ for the union $X\coprod  B$ of a based $G$-space $(X,p)$ over $ B$ with a disjoint section, i.e. $(X,p)_+=(X\coprod  B, p\coprod \id, i)$, where $i: B\to X\coprod  B$ is the natural inclusion.

If $(X,p)$ is a $G$-space over $B$ and $Z$ is a based $G$-space, then let $X \times_B Z$ be the $G$-space $X \times Z$ with projection the product of the projections $p:X \to B$ and $Z\to *$. Define $X\wedge_B Z$ to be the quotient of $X \times_B Z$ obtained by taking fiberwise smash products, so that $(X \wedge_B Z)_b = X_b \wedge Z$; the basepoints of fibers prescribe the section.

For $G$-spaces $(X,p)$ and $(Y,q)$ over $B$, $X \times_B Y$ is the pullback of the projections $p:X \to B$ and $q:Y\to B$, with the evident $G$-projection $X \times_B Y \to B$. When $X$ and $Y$ have $G$-equivariant sections $s$ and $t$, their pushout $X \vee_B Y$ specifies the coproduct, or wedge, of $X$ and $Y$ in the category of based proper $G$-spaces, and $s$ and $t$ induce a $G$-map $X \vee_B Y \to X \times_B Y$ over $B$ that sends $x$ and $y$ to $(x, tp(x))$ and $(sq(y), y)$. Then $X \wedge_B Y$ is the pushout in the category of compactly generated spaces over $B$, displayed in the diagram $$\xymatrix{X \vee_B Y \ar[r] \ar[d] & X \times_B Y \ar[d] \\ {*}_B \ar[r] & X \wedge_B Y.}$$ This implies that $(X \wedge_B Y )_b = X_b \wedge Y_b$, and the section and projection are evident maps.

We denote by $\Sigma_B X$ the $G$-space $S^1\wedge_B X$ over $B$, where $S^1$ has the trivial $G$-action. 

\subsection{$qf$-model category structure for $G$-CW-complexes over $B$}
Let $n$ be a natural number. Let $I^G$ be the set of all maps of the form $G/H_+ \times i$, where $H$ is a finite subgroup of $G$ and $i$ runs through the set of based inclusions $i:S^{n-1}_+\to D^n_+$ (where $S^{-1}$ is empty). Analogously, let $J^G$ be the set of all maps of the form $G/H_+ \times i_0$, where $H$ is a finite subgroup of $G$ and $i_0$ runs through the set of based maps $i_0:D^n_+\to (D^n\times I)_+$.

Given maps $i:(X,p)\to (Y,q)$ and $d:(Y,q)\to B$ of based $G$-CW-complexes, the composition $d\circ i:(X,p)\to B$ defines $i$ as a map over $B$. We write $i(d)$ for this map over $B$.
Let $I^G_B$ be the set of all such maps $i(d)$ with $i\in I^G$, and denote by $J_B^G$ the set of all such maps $j(d)$ with $j\in J^G$.

In order to define the $qf$-model category structure on proper $G$-CW-complexes over $B$ we need to recall the definition of $q$-fibration.

\begin{proposition}{\cite[Prop. 6.2.2]{maysigurdsson}}
The following conditions on a map of compactly generated spaces $p:E\to Y$ are equivalent. If they are satisfied, then $p$ is called a q-fibration.
\begin{enumerate}
\item The map $p$ satisfies the covering homotopy property with respect to disks $D^n$; that means there is a lift in the following diagram
  $$\xymatrix{
  D^n\ar[r]^{\alpha}\ar[d]&E\ar[d]^{p}\\
  D^n\times I\ar@{-->}[ur]\ar[r]^{ \ \ \  h}&Y.}$$
\item If $h$ is a homotopy relative to the boundary $S^{n-1}$ in the diagram above, then there is a lift that is a homotopy relative to the boundary.
\item The map $p$ has the relative lifting property (RLP) with respect to the inclusion $S^n_+\to D^{n+1}$
of the upper hemisphere into the boundary $S^n$ of $D^{n+1}$; that is, there is a lift in the
diagram
  $$\xymatrix{
  S^n_+\ar[r]^{\alpha}\ar[d]&E\ar[d]^p\\
  D^{n+1}\ar@{-->}[ur]\ar[r]^{ \ \ \ \bar{h}}&Y.}$$
\end{enumerate}
\end{proposition}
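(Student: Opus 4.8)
The plan is to derive all three statements from one geometric fact about ball pairs: for every $n\geq 0$ there is a homeomorphism of pairs
$$(D^n\times I,\ D^n\times\{0\})\ \cong\ (D^{n+1},\ S^n_+),$$
and, more generally, any $n$-ball sitting collared in the boundary sphere $S^n$ of $D^{n+1}$ whose complementary $n$-ball in $S^n$ is again collared can be carried onto $S^n_+$ by a homeomorphism of $D^{n+1}$. I would prove this by writing $S^n=\partial D^{n+1}$ as a union of two $n$-balls glued along an $(n-1)$-sphere in each of the two relevant ways, using the Alexander trick (any self-homeomorphism of $S^{n-1}$ extends over a bounding $n$-ball) to see that the two decompositions are interchanged by a self-homeomorphism of $S^n$, and then coning that homeomorphism radially to a homeomorphism of $D^{n+1}$.

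Granting this, condition $(1)$ says precisely that $p$ has the relative lifting property with respect to the inclusions $D^n\times\{0\}\hookrightarrow D^n\times I$, while condition $(3)$ is the relative lifting property with respect to $S^n_+\hookrightarrow D^{n+1}$; transporting the commutative lifting square along the homeomorphism above turns one lifting problem into the other, so $(1)\Leftrightarrow(3)$. For condition $(2)$ I would observe that a homotopy $h\colon D^n\times I\to Y$ which is constant on $S^{n-1}$ is the same datum as a map out of the quotient $Z$ obtained from $D^n\times I$ by collapsing each segment $\{s\}\times I$, $s\in\partial D^n$, to the point $(s,0)$. Crushing these segments turns the cylinder into an $(n+1)$-ball, and the image of $D^n\times\{0\}$ in $Z$ is a collared $n$-ball in $\partial Z$; so by the unknotting statement the pair consisting of $Z$ and the image of $D^n\times\{0\}$ is homeomorphic to $(D^{n+1},S^n_+)$. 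Since a lift of $h$ descends to $Z$ exactly when it is constant along the collapsed segments, a lift of $h$ relative to $S^{n-1}$ extending a prescribed lift on $D^n\times\{0\}$ is the same as a lift through $p$ of the induced map $\bar h\colon Z\to Y$ extending the induced partial lift; hence condition $(2)$ is again the relative lifting property with respect to $S^n_+\hookrightarrow D^{n+1}$, i.e. condition $(3)$. Combining, the three conditions are equivalent.

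The main obstacle is the unknotting lemma of the first paragraph together with the recognition of $Z$ as a ball; the rest is bookkeeping. Two points deserve mild care: near the $(n-1)$-sphere where two $n$-dimensional faces of the cylinder meet there are corners, so ``collared'' is to be understood in the manifold-with-corners sense, which does not disturb the coning argument; and in the analysis of $(2)$ one should check that the quotient map $D^n\times I\to Z$ restricts to a homeomorphism on $D^n\times\{0\}$ with full-dimensional image in $\partial Z$, so that the two lifting problems genuinely match. Since the statement is \cite[Prop. 6.2.2]{maysigurdsson}, one may alternatively just quote it; the sketch above records why it holds and pins down the identifications used in the remainder of the appendix.
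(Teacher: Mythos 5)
Your argument is essentially correct, but note that the paper itself offers no proof of this proposition: it is quoted verbatim from \cite[Prop.\ 6.2.2]{maysigurdsson} and used as a black box in the appendix, so there is nothing internal to compare against beyond the citation you yourself mention as a fallback. What you write is the classical argument for the equivalence of the Serre-fibration conditions, and the two reductions are sound: $(1)\Leftrightarrow(3)$ via a homeomorphism of pairs $(D^n\times I, D^n\times\{0\})\cong(D^{n+1},S^n_+)$, and $(2)\Leftrightarrow(3)$ by observing that a homotopy rel $S^{n-1}$ is exactly a map factoring through the quotient $Z$ of $D^n\times I$ that collapses each segment $\{s\}\times I$, $s\in\partial D^n$, and that $(Z,\,\mathrm{image\ of\ }D^n\times\{0\})\cong(D^{n+1},S^n_+)$; your cautionary remarks (the quotient map is injective on $D^n\times\{0\}$, so initial lifts correspond exactly) are the right ones. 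The only criticism is that the ``unknotting'' lemma via the Alexander trick and corner-collaring is heavier machinery than the situation requires: both identifications can be written down explicitly, e.g.\ the map $D^n\times I\to D^{n+1}\subset\mathbb{R}^n\times\mathbb{R}$, $(x,t)\mapsto\bigl(x,(1-2t)\sqrt{1-|x|^2}\bigr)$, is a closed surjection whose point-inverses are precisely the collapsed segments, so it induces a homeomorphism $Z\cong D^{n+1}$ carrying the image of $D^n\times\{0\}$ onto $S^n_+$, and a similar formula (or any standard product-of-balls homeomorphism adjusted on the boundary) handles the pair homeomorphism needed for $(1)\Leftrightarrow(3)$. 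So your route buys a self-contained proof of a statement the paper merely cites, at the cost of invoking general position/unknotting input that an explicit formula renders unnecessary.
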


\begin{definition}
A map $g$ of spaces over $B$ is an $f$-cofibration if it satisfies the fiberwise homotopy extension property (HEP), that is, if it has the left lifting property (LLP) with respect to the maps $p_0: \mathrm{Map}_B (I,X) \to X$.
 \end{definition}

A map $d:D^n\to B$ of compactly generated spaces is said to be an $f$-disk if $i(d):S^{n-1}\to D^n$ is an $f$-cofibration. An $f$-disk $d:D^{n+1}\to B$ is said to be a relative $f$-disk if the lower hemisphere $S^n_-$ is also an $f$-disk, so that the restriction $i(d):S^{n-1} \to S^n_-$ is an $f$-cofibration.

A map $f: (X,p,s)\to (Y, q,t)$ of based $G$-spaces over $B$ is called a $q$-equivalence if $f :X \to Y$ is a $G$-equivariant weak equivalence of spaces (forgetting the based structure over $B$ ).
 Define $I_B^f$ to be the set of inclusions $i(d):S^{n-1}\to D^n$ where $d:D^n\to B$ is an $f$-disk. Define $J_B^f$ to be the set of inclusions $i(d):S^n_+\to D^{n+1}$ of the upper hemisphere into a relative $f$-disk $d:D^{n+1}\to B$. A map of compactly generated spaces over $B$ is said to be 
 \begin{enumerate}
 \item a $qf$-fibration if it has the RLP with respect to $J_B^f$ and
 \item a $qf$-cofibration if it has the LLP with respect to all $q$-acyclic $qf$-fibrations, that is, with respect to those $qf$-fibrations that are $q$-equivalences.
 \end{enumerate}

Now we proceed equivariantly. Let $\OO_G^\calall$ denote the set of all orbits $G/H$.
\begin{definition}
A set $\calc$ of proper $G$-CW-complexes that contains the orbits $G/K$ with $K\in\calfin(G)$ and is closed under products with elements in $\OO_G^\calall$ is called a generating set. It is closed if it is closed under finite products.
\end{definition}

Let $\calc$ be a generating set.
\begin{enumerate}
\item Let $I_B^f(\calc)$ be the set of maps $$(\id\times i)(d)\coprod \id:C\times S^{n-1}\coprod B\to C\times D^n\coprod B$$such that $C\in\calc$, $d:C\times D^n\to B$ is a $G$-map, $i$ is the boundary inclusion, and the associated map $\widetilde{i}$ over $\mathrm{Map}_G(C, B)$ is a generating  $qf$-cofibration in the category of compactly generated spaces over $\mathrm{Map}_G(C, B)$.
\item Let $J_B^f(\calc)$ consist of the maps $$(\id\times i)(d)\coprod \id:C\times S^{n}_+\coprod  B\to C\times D^{n+1}\coprod  B$$such that $C\in\calc$, $d:C\times D^{n+1}\to  B$ is a $G$-map, $i$ is the inclusion, and the associated map $\widetilde{i}$ over $\mathrm{Map}_G(C, B)$ is a generating acyclic $qf$-cofibration in the category of compactly generated spaces over  $\mathrm{Map}_G(C, B)$.
\end{enumerate}

Fixing a generating set $\calc$, we define a $qf$-type model structure based on $\calc$, called the $qf(\calc)$-model structure. Its weak equivalences are the weak equivalences of proper $G$-CW-complexes. We define now the $qf(\calc)$-fibrations.
\begin{definition}
A $G$-map over $ B$ is a $qf(\calc)$-fibration if $\mathrm{Map}_G(C,f)$ is a $qf$-fibration in the category of compactly generated spaces over $\mathrm{Map}_G(C,B)$, for all $C\in\calc$.
\end{definition}
In \cite[Section 5.5, p.90]{maysigurdsson}, the notion of \emph{well grounded} model category is  introduced. There it  is  established  that the category of based proper $G$-CW-complexes can be endowed with a structure of a well grounded model category. 
\begin{theorem}{\cite[Th. 7.2.8]{maysigurdsson}}\label{wellgrounded}
For any generating set $\calc$ the category of based proper $G$-CW-complexes over $B$ is a well grounded model category. The weak equivalences  are the based weak $G$-homotopy equivalences and fibrations are $qf(\calc)$-fibrations. The sets $I_B^f(\calc)$ and $J_B^f(\calc)$ are the generating $qf(\calc)$-cofibrations and the generating acyclic $qf(\calc)$-cofibrations. 
\end{theorem}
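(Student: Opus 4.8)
The plan is to obtain the model structure as an instance of the recognition principle for well grounded model categories of \cite[Chapter 5]{maysigurdsson}, applied to the category $\mathcal{K}_{G,B}$ of based proper $G$-CW-complexes over $B$ equipped with the two generating sets $I^f_B(\calc)$ and $J^f_B(\calc)$. To invoke that principle one checks: (i) $\mathcal{K}_{G,B}$ is topologically bicomplete and carries a ground structure, the relevant enrichment being supplied by the fiberwise smash product $-\wedge_B Z$ over based spaces $Z$ together with the tensor and cotensor over based $G$-spaces; (ii) the domains of the maps in $I^f_B(\calc)$ and $J^f_B(\calc)$ are compact, so that the small object argument applies and every map factors functorially as a relative $I^f_B(\calc)$-cell complex followed by an $I^f_B(\calc)$-injective map, and similarly for $J^f_B(\calc)$; (iii) the Cofibration Hypothesis holds, that is, relative $I^f_B(\calc)$- and $J^f_B(\calc)$-cell complexes are sequential colimits of pushouts of fiberwise $h$-cofibrations, so that these colimits behave homotopically as required; and (iv) the acyclicity condition, that every relative $J^f_B(\calc)$-cell complex is a weak $G$-equivalence and that a map is a $qf(\calc)$-fibration which is also a weak equivalence exactly when it has the right lifting property against $I^f_B(\calc)$.

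Conditions (i)--(iii) are essentially formal and reduce to the corresponding facts for compactly generated spaces over a base recorded in \cite[Section 6.2]{maysigurdsson}. For (iii) the point is that, by definition, $i(d)\colon S^{n-1}\to D^n$ is a fiberwise cofibration whenever $d$ is an $f$-disk, and this property is preserved by the operations $C\times(-)\coprod B$ and by the pushouts and sequential colimits used to assemble cell complexes; for (ii), compactness of the domains follows from compactness of $C\times D^n$ for $C\in\calc$ together with the properness of the $G$-action, which controls the relevant colimits.

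The substance of the argument is (iv), and my plan is to reduce it to the already established non-equivariant $qf$-model structure on spaces over a base through the family of functors $\mathrm{Map}_G(C,-)\colon \mathcal{K}_{G,B}\to \mathcal{K}_{\mathrm{Map}_G(C,B)}$ indexed by $C\in\calc$. By construction $J^f_B(\calc)$ consists precisely of those maps which, after applying $\mathrm{Map}_G(C,-)$, become generating acyclic $qf$-cofibrations over $\mathrm{Map}_G(C,B)$; since each such functor is a right adjoint it preserves the limits appearing in the formation of cell complexes, so, using the Cofibration Hypothesis, it carries a relative $J^f_B(\calc)$-cell complex to a relative acyclic $qf$-cofibration over $\mathrm{Map}_G(C,B)$, hence to a weak equivalence. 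Because $\calc$ contains all orbits $G/K$ with $K\in\calfin(G)$ and is closed under products with orbits in $\OO_G^\calall$, a map of $\mathcal{K}_{G,B}$ is a weak $G$-equivalence if and only if $\mathrm{Map}_G(C,-)$ sends it to a $q$-equivalence for every $C\in\calc$ — the parametrized form of Elmendorf's comparison, which identifies fixed points with such mapping spaces. Combining these two facts shows that relative $J^f_B(\calc)$-cell complexes are weak $G$-equivalences; dually, a map has the right lifting property against $I^f_B(\calc)$ iff $\mathrm{Map}_G(C,-)$ applied to it is an acyclic $qf$-fibration for all $C$, iff it is a $qf(\calc)$-fibration and a weak equivalence. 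Feeding (i)--(iv) into the recognition theorem produces the $qf(\calc)$-model structure with the stated weak equivalences, fibrations, and generating (acyclic) cofibrations; the remaining well groundedness axioms — the pushout-product compatibility of $-\wedge_B-$ with $I^f_B(\calc)$ and $J^f_B(\calc)$, and the gluing and colimit lemmas for weak $G$-equivalences — are again transported from the base case through the functors $\mathrm{Map}_G(C,-)$.

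The main obstacle I expect is the interaction of the right adjoints $\mathrm{Map}_G(C,-)$ with the pushouts and transfinite composites that define $I^f_B(\calc)$- and $J^f_B(\calc)$-cell complexes: a right adjoint need not preserve such colimits on the nose, so one must use the Cofibration Hypothesis and the properness of the $G$-action on the cells $C\times D^n$ to guarantee that these pushouts are homotopy pushouts on which $\mathrm{Map}_G(C,-)$ behaves correctly up to $q$-equivalence, thereby legitimizing the transfer of the acyclicity and lifting statements. Once that compatibility is secured the remainder is bookkeeping against the framework of \cite[Chapters 5--7]{maysigurdsson}.
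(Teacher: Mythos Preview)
The paper does not prove this theorem at all: it is quoted verbatim as \cite[Th.~7.2.8]{maysigurdsson} and used as a black box, so there is no ``paper's own proof'' to compare against. Your outline is essentially the strategy May and Sigurdsson themselves follow, namely to run the recognition principle of \cite[Chapter~5]{maysigurdsson} with the generating sets $I^f_B(\calc)$ and $J^f_B(\calc)$ and to verify the acyclicity and lifting conditions by reduction, via the adjunctions indexed by $C\in\calc$, to the nonequivariant $qf$-model structure over $\mathrm{Map}_G(C,B)$ already established in \cite[\S6.2]{maysigurdsson}.

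One point deserves correction. You write that $\mathrm{Map}_G(C,-)$, being a right adjoint, ``preserves the limits appearing in the formation of cell complexes'' and thereby sends relative $J^f_B(\calc)$-cell complexes to relative acyclic $qf$-cell complexes. Cell complexes are built from \emph{colimits} (pushouts and sequential colimits), which a right adjoint has no reason to preserve, and you later flag exactly this as the main obstacle. The cleaner route, and the one May--Sigurdsson take, avoids this problem entirely: the adjunction is used only to translate \emph{lifting problems}, so that a map $p$ has the RLP against $I^f_B(\calc)$ (resp.\ $J^f_B(\calc)$) iff each $\mathrm{Map}_G(C,p)$ has the RLP against the nonequivariant generating (acyclic) $qf$-cofibrations. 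For acyclicity of relative $J^f_B(\calc)$-cell complexes one argues directly: each generating map $C\times S^n_+\amalg B \to C\times D^{n+1}\amalg B$ is already a $G$-homotopy equivalence (the inclusion $S^n_+\hookrightarrow D^{n+1}$ is a deformation retract) and an $f$-cofibration over $B$, and the gluing and colimit lemmas for well-grounded categories then propagate this through pushouts and transfinite composites. No transport of colimits through $\mathrm{Map}_G(C,-)$ is needed.
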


We define a $qf$-fibration in the category of based $G$-spaces over $B$ as a map which is a $qf$-fibration when regarded as a map of $G$-spaces over $B$, and similarly for $qf$-cofibrations.

Let $(Y,q,t)$ is a based $G$-space over $B$ and $f:A\to B$ be a $G$-map, we define $f^*Y$ as the based $G$-space over $A$ obtained from the pullback diagram
$$\xymatrix{
	A\ar[d]^{s}\ar[r]^{f}&B\ar[d]^{t}\\
	f^*Y\ar[d]^{p}\ar[r]&Y\ar[d]^{q}\\
	A\ar[r]^{f}&B
	}$$
	
On the other hand if $(X,s,p)$ is a based $G$-space over $A$ and $f:A\to B$, define $f_*X$ and its structure maps $q$ and $t$ by means of the
map of retracts in the following diagram on the left, where the top square is a
pushout and the bottom square is defined by the universal property of pushouts
and the requirement that $q\circ t=id$.

$$\xymatrix{
	A\ar[d]^{s}\ar[r]^f&B\ar[d]^{t}\\
	X\ar[d]^{p}\ar[r]&f_!X\ar[d]^{q}\\
	A\ar[r]^f&B
}$$\vspace{0.5cm}

Recall that an adjoint pair of functors $(T,U)$ between model
categories is a \emph{Quillen adjoint pair}, or a \emph{Quillen adjunction}, if the left adjoint $T$ preserves cofibrations and acyclic cofibrations or, equivalently, the right adjoint $U$ preserves fibrations and acyclic fibrations. It is a \emph{Quillen equivalence} if the induced adjunction on homotopy categories is an adjoint equivalence.

The  following  base  change  result  will  be  useful  in  the  following: 

\begin{proposition}\label{prop: quilllen equiv rectrac}. If $f: Z_1 \rightarrow Z_2$ is a $G$-map, then the pair $(f_{\ast},f^{\ast})$ is a Quillen adjunction for the model category structures defined above. Morever, if $f$ is a weak  $\mathcal{F}$-equivalence with  respect  to  a family  of  subgroups  $\mathcal{F}$,  then $(f_{\ast},f^{\ast})$ is a Quillen equivalence

\end{proposition}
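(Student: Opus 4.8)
The plan is to reduce both assertions to the corresponding base-change statements for parametrized spaces proved in \cite[Chapter 7]{maysigurdsson}, transporting them through the description of the $qf(\calc)$-model structures in terms of the functors $\mathrm{Map}_G(C,-)$ with $C\in\calc$. Throughout, $f_{\ast}$ denotes the pushforward over the base defined above by the pushout square, which is left adjoint to the pullback $f^{\ast}$; and the weak $\mathcal{F}$-equivalence hypothesis will be used through the fact that such an $f$ is in particular a weak equivalence for these model structures, whose weak equivalences test fixed points only for finite subgroups.

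For the Quillen adjunction it suffices to show that $f^{\ast}$ preserves $qf(\calc)$-fibrations and acyclic $qf(\calc)$-fibrations. By definition a map $p$ of based proper $G$-CW-complexes over a base $W$ is a $qf(\calc)$-fibration exactly when $\mathrm{Map}_G(C,p)$ is a $qf$-fibration over $\mathrm{Map}_G(C,W)$ for every $C\in\calc$, and it is acyclic when it is moreover a weak equivalence. As a right adjoint, $f^{\ast}$ commutes with the cotensor by $C$, giving a natural homeomorphism $\mathrm{Map}_G(C,f^{\ast}Y)\cong\mathrm{Map}_G(C,f)^{\ast}\mathrm{Map}_G(C,Y)$ of spaces over $\mathrm{Map}_G(C,Z_1)$. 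Since both $qf$-fibrations and acyclic $qf$-fibrations of compactly generated spaces are characterized by right lifting properties (against $J^f$ and $I^f$ respectively), they are stable under pullback along an arbitrary map, and the displayed identification then shows that $f^{\ast}$ has the required property. Alternatively one may argue on the left, following \cite[Section 7.3]{maysigurdsson}: the pushforward $f_{\ast}$ carries a generating $qf(\calc)$-cofibration over $Z_1$ to a generating one over $Z_2$---its effect being merely to replace the structural $G$-map $d\colon C\times D^n\to Z_1$ by $f\circ d$ and to glue $Z_2$ in place of $Z_1$ along the disjoint section---and likewise for the generating acyclic cofibrations, so that $f_{\ast}$, preserving colimits, preserves all cofibrations and acyclic cofibrations. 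Either way, $(f_{\ast},f^{\ast})$ is a Quillen adjunction.

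Suppose now that $f$ is a weak $\mathcal{F}$-equivalence. By Theorem \ref{wellgrounded} the $qf(\calc)$-model structures on based proper $G$-CW-complexes over $Z_1$ and over $Z_2$ are well grounded, and in particular left and right proper. Right properness yields, for every fibrant object $Y$ over $Z_2$---that is, every $qf(\calc)$-fibration $Y\to Z_2$---that the base-change projection $f^{\ast}Y\to Y$, being the pullback of the weak equivalence $f$ along that fibration, is again a weak equivalence. I would then extract two consequences. First, $f^{\ast}$ reflects weak equivalences between fibrant objects: for a map $Y\to Y'$ of fibrant objects over $Z_2$, comparing it with $f^{\ast}Y\to f^{\ast}Y'$ through the two base-change projections shows one is a weak equivalence if and only if the other is; consequently $\mathbb{R}f^{\ast}$ is computed on fibrant objects by $f^{\ast}$ itself. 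Second, by the standard criterion for a Quillen pair to be a Quillen equivalence, it remains only to check that for every cofibrant object $X$ over $Z_1$ the composite $X\to f^{\ast}f_{\ast}X\to f^{\ast}R(f_{\ast}X)$ is a weak equivalence, $R$ denoting a fibrant replacement over $Z_2$. This last verification is where the real work lies, and it is the step I expect to be the main obstacle: $f^{\ast}$ does not commute with the pushout defining $f_{\ast}X$, so the unit cannot be manipulated purely formally and one must instead argue fiberwise over the points of $Z_1$. The relevant input is that the pushout presenting $f_{\ast}X$ is a homotopy pushout---its left leg $Z_1\to X$ being a $qf(\calc)$-cofibration because $X$ is cofibrant---together with the gluing lemma available in a left proper, well-grounded model category; this is precisely the argument used for the base-change equivalence in \cite[Section 7.3]{maysigurdsson}, which carries over once all constructions are read inside the proper-equivariant $qf(\calc)$-structure. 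Granting it, the criterion is met and $(f_{\ast},f^{\ast})$ is a Quillen equivalence.
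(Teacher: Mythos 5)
Your argument is correct and is essentially the paper's own proof, which consists solely of citing Propositions 7.3.4 and 7.3.5 of \cite{maysigurdsson}: your verification that $f_{\ast}$ carries generating (acyclic) $qf(\calc)$-cofibrations to such, and your use of properness and the gluing lemma for the Quillen-equivalence part, are precisely the arguments behind those cited results. The only cosmetic point is that the equivalence step needs no ``fiberwise'' analysis: since $Z_1\to X$ is a cofibration for cofibrant $X$, left properness gives that $X\to f_{\ast}X$ is a weak equivalence, and combining this with right properness ($f^{\ast}Y\to Y$ a weak equivalence for fibrant $Y$) and two-out-of-three yields the criterion directly, exactly as in \cite[Section 7.3]{maysigurdsson}.
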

\begin{proof} This is the content of Propositions 7.3.4 and 7.3.5 in \cite{maysigurdsson}. Note that $f_{\ast}$ is denoted by $f_{!}$ in \cite{maysigurdsson}.

\end{proof}

\begin{theorem}
Let  $G$  be a  discrete  group. For  a $G$-CW  complex  $B$,    there  exists  a  ziz-zag  of  Quillen equivalences  between  the  category  of $G$-spaces  over  $B$ with  the  $qf$-model  structure  and   $\OO_G^P$-spaces  over  $\Phi B$  with the  levelwise  model structure. 

\end{theorem}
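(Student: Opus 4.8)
The plan is to establish the zig-zag of Quillen equivalences by interpolating between the category of $G$-spaces over $B$ and the category of $\OO_G^P$-spaces over $\Phi B$ through the non-parametrized Elmendorf-type equivalence, using the base-change result Proposition \ref{prop: quilllen equiv rectrac} to handle the parametrization. The guiding principle is that parametrizing over $B$ is, on homotopy categories, the same as working in the slice category, and that the Elmendorf correspondence $(\_ \times_{\OO_G^P}\nabla, \Phi)$ of \cite[lemma 7.2]{lueckdavis} intertwines the two slice categories since $\Phi$ sends $B$ to $\Phi B$ and $(\_\times_{\OO_G^P}\nabla)$ sends $\Phi B$ back to $B$ (up to the natural $G$-homeomorphism $(\Phi B)\times_{\OO_G^P}\nabla \cong B$ recorded in Section \ref{sectionOGspaces}).

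\textbf{Step 1.} First I would recall, or prove in the Appendix's model-categorical language, that the adjoint pair $(\_\times_{\OO_G^P}\nabla, \Phi)$ is a Quillen equivalence between the $qf$-model structure on proper $G$-CW-complexes (Theorem \ref{wellgrounded} with $B = \mathrm{pt}$, i.e. the absolute case) and the levelwise $qf$-model structure on $\OO_G^P$-spaces. This is the parametrized-trivial case and is essentially the content of \cite{elmendorf} translated into the $qf$-framework of \cite{maysigurdsson}; the key inputs are that $\Phi$ preserves and reflects weak equivalences by definition (it is $(\_)^H$ levelwise) and fibrations (levelwise Serre fibrations on fixed points), and that the counit $(\Phi Y)\times_{\OO_G^P}\nabla \to Y$ is a $G$-homeomorphism.

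\textbf{Step 2.} Then I would pass to slice categories. For a Quillen equivalence $(T,U)$ between model categories $\mathcal{M}$ and $\mathcal{N}$ and an object $B \in \mathcal{M}$, the induced adjunction on slice categories $\mathcal{M}/B \rightleftarrows \mathcal{N}/UB$ (with left adjoint sending $X\to B$ to $TX \to TUB \to \dots$, composing with the counit, and right adjoint sending $Y\to UB$ to $UY \to UUB$, er, to the pullback) is again a Quillen equivalence, \emph{provided} one is careful about which direction the structure map goes. Concretely: the category of $G$-spaces over $B$ is the slice $\GSPACES{G}/B$, the category of $\OO_G^P$-spaces over $\Phi B$ is the slice $\OO_G^P\text{-}\SPACES/\Phi B$, and $\Phi$ induces a right Quillen functor $\GSPACES{G}/B \to \OO_G^P\text{-}\SPACES/\Phi B$ sending $p:X\to B$ to $\Phi p : \Phi X \to \Phi B$. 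The left adjoint goes $\YY \to \Phi B \ \mapsto\ \YY\times_{\OO_G^P}\nabla \to (\Phi B)\times_{\OO_G^P}\nabla \cong B$. Invoking Proposition \ref{prop: quilllen equiv rectrac} with the map $f$ being the counit (a weak $\mathcal{F}$-equivalence for $\mathcal{F}$ the family of finite subgroups) takes care of the one place where the slice objects don't match on the nose, giving the zig-zag: $\GSPACES{G}/B$ is Quillen equivalent to a slice over the cofibrant-fibrant replacement, which is Quillen equivalent via base change to the honest slice, which matches $\OO_G^P\text{-}\SPACES/\Phi B$ under Step 1 restricted to the slice.

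\textbf{The hard part will be} verifying that the Elmendorf Quillen equivalence of Step 1 descends to slice categories \emph{in the $qf$-model structure}: the generating (acyclic) cofibrations $I_B^f(\calc)$, $J_B^f(\calc)$ of Theorem \ref{wellgrounded} are defined by a fiberwise condition over $\mathrm{Map}_G(C,B)$, and one must check that $\Phi$ and its adjoint interact correctly with these $f$-disk conditions — i.e. that $\Phi$ of a $qf(\calc)$-fibration over $B$ is a $qf$-fibration over $\Phi B$ levelwise. I expect this reduces, via the adjointness $\mathrm{Map}_G(C,-) \cong \mathrm{Hom}_{\OO_G^P}(\Phi C, \Phi -)$ on the relevant mapping spaces and the fact that $\Phi C$ for $C$ an orbit is a representable $\OO_G^P$-space, to the non-parametrized statement of Step 1 applied fiberwise, but making this precise — in particular identifying the generating set $\calc$ on the $G$-side with the appropriate generating objects on the $\OO_G^P$-side — is where the bookkeeping is genuinely delicate. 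The remaining verifications (that $\Phi p$ is again fibrant/cofibrant appropriately, that weak equivalences are detected levelwise, that the derived unit and counit are equivalences) follow formally from Step 1 plus Proposition \ref{prop: quilllen equiv rectrac}.
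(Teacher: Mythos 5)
There is a genuine gap, and it sits exactly where you flag ``the hard part'': you never establish that the induced slice adjunction is a Quillen adjunction, let alone a Quillen equivalence, for the $qf$-model structure, and this is not a formal consequence of the absolute Elmendorf equivalence. The obstruction is concrete: the generating cofibrations of the levelwise structure on $\OO_G^P$-spaces over $\Phi B$ are arbitrary cells $\mathrm{Mor}_{\OO_G^P}(?,G/H)\times(S^{n-1}\to D^n)$ over $\Phi B$, and the left adjoint $-\times_{\OO_G^P}\nabla$ sends them to cells $G/H\times(S^{n-1}\to D^n)$ over $B$ with no control on the projection $d\colon G/H\times D^n\to B$; but a cell over $B$ is a generating $qf(\calc)$-cofibration only when $d$ satisfies the $f$-disk condition of Theorem \ref{wellgrounded} (equivalently, a $qf$-fibration need only lift against this restricted class, so $\Phi$ of a $qf$-fibration is not obviously a levelwise fibration). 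Your appeal to Proposition \ref{prop: quilllen equiv rectrac} does no work here: you apply it to the counit $(\Phi B)\times_{\OO_G^P}\nabla\to B$, which is a $G$-homeomorphism, so there is no mismatch for it to repair, and it does not touch the $qf$-compatibility problem. Saying that the remaining verifications ``follow formally'' is therefore not justified; the one non-formal point is left open.

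The paper's proof avoids this difficulty altogether, which is why the statement is a zig-zag rather than a single adjunction. One takes a functorial cellular approximation $J(\Phi B)\to\Phi B$ by a \emph{free} $\OO_G^P$-CW complex and moves both sides over this nice base: base change along $\epsilon\colon J(\Phi B)\times_{\OO_G^P}\nabla\to B$ (a $G$-homotopy equivalence, since $J(\Phi B)$ is free) gives a Quillen equivalence between $G$-spaces over $B$ and $G$-spaces over $J(\Phi B)\times_{\OO_G^P}\nabla$, base change along $\epsilon'\colon J(\Phi B)\to\Phi B$ gives one between $\OO_G^P$-spaces over $\Phi B$ and over $J(\Phi B)$ --- these are the genuine uses of Proposition \ref{prop: quilllen equiv rectrac} --- and only over the cofibrant base $J(\Phi B)$ is the Elmendorf-type comparison between parametrized $G$-spaces and parametrized $\OO_G^P$-spaces invoked. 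If you want to salvage your direct slice argument, you would either have to prove the $qf$-compatibility you postpone (identifying the image of the levelwise generators among the $I_B^f(\calc)$, $J_B^f(\calc)$ of Theorem \ref{wellgrounded}), or first replace the base as the paper does; in its current form the proposal is missing the cofibrant replacement of $\Phi B$, which is the key idea.
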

\begin{proof}

Let  $\Phi$  be  the  fixed  point  functor, which  associates  an $\OO_G^P$-complex to a  $G$-CW complex $X$. Notice  that   the  additional  section  or  projection  data $p: X\to B$,  and $s:B\to X$  restrict to  fixed  points.

Let $J$  be  a functorial cellular  approximation functor in the  category  of $\OO_G^P$-spaces (See  Theorem  3.7  in  \cite{lueckdavis}),  in the  sense  that  $J(\XX)$  is  a free  $\OO_G^P$-CW complex  for  every  $\OO_G^P$-space $\XX$,  and $J(\XX)\to \XX$ is  a  weak  $\OO_G^P$-equivalence .
 
The  cellular  approximation  functor  defines  a map $\epsilon: J( \Phi B) \times_{\OO_G^P} \nabla \to B$,  which is  a  $G$-homotopy  equivalence.  The  base  change  functors $(\epsilon_*, \epsilon^*)$  satisfy the  hypothesis  of Proposition \ref{prop: quilllen equiv rectrac} thus  giving  a  Quillen equivalence pair   between  the  categories  of  $G$-spaces  over $B$ and over $J( \Phi B) \times_{\OO_G^P} \nabla $.

Analogously, the mentioned cellular  approximation defines  a map  of  $\OO_G^P$-spaces $\epsilon^{'}: J\Phi(B)   \to \Phi (B)$   giving  a  Quillen  equivalence  ${\epsilon^{'}}_*, {\epsilon^{'}}^{*}$ between  the  categories  of  $\OO_G^P$-spaces  over $\Phi B$ and over $J( \Phi B)$. 

Finally,  the $\OO_G^P$ map $\Phi B\to J(\Phi B)$ determines  a Quillen equivalence  pair  between  the  categories of  $\OO_G^P$ spaces over $\Phi B$  and $G$-spaces  over  $J(\Phi B)\times_{\OO_G^P} \nabla$.

\end{proof}

\subsection{Generating sets in the category of proper $G$-CW-complex  over $ B$.}

Let $\mathfrak{H}$ be a category with weak colimits, denoted by $\hocolim Y_n$, we say that an object $X$ of $\mathfrak{H}$ is compact if
$$\colim \mathfrak{H}(X,Y_n)\cong \mathfrak{H}(X,\hocolim(Y_n))$$
for any sequence of maps $Y_n\to Y_{n+1}$ in $\mathfrak{H}$.
\begin{definition}
	A set $\cald$ of objects in a pointed category $\mathfrak{H}$ is a generating set if a map $f:X\to Y$ such that $f_*:\mathfrak{H}(D,X)\to \mathfrak{H}(D,Y)$ is a bijection for all $D\in\cald$ is an isomorphism.
\end{definition}

For $n>0$, $b\in B$, and $H\subset G_b$, let $S^{n,b}_H$ be the based $G$-space over $B$ given by $(G/H_+\wedge S^n)\vee_bB$, where the wedge is taken with respect to the standard basepoint of $G/H_+\wedge S^n$ and the basepoint $b\in B$. The inclusion of $B$ gives the section and the projection maps $G/H_+\wedge S^n$ to the point $b$ and maps $B$ by the identity map.

Let $\mathbb{D}_B^c$ be the set of all such based $G$-spaces $S_H^{n,b}$ over $B$, with $n>0$. Then, from \cite[Lemma 7.5.13-14]{maysigurdsson} it follows the next result.

\begin{lemma}
$\mathbb{D}_B^c$ is a generating set in the homotopy category of based $G$-connected spaces over $B$. Moreover, each element in $\mathbb{D}_B^c$ is a compact object in that category.
\end{lemma}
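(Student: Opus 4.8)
This is the parametrized equivariant analogue of \cite[Lemmas 7.5.13 and 7.5.14]{maysigurdsson}, and the plan is to carry out those arguments $G$-equivariantly and fiberwise; the technical core is an explicit description of homotopy classes of maps out of the objects $S_H^{n,b}$. First I would compute the relevant mapping sets. Let $(Y,q,t)$ be a based $G$-space over $B$, let $b \in B$ and $H \subseteq G_b$. A based $G$-map over $B$ from $S_H^{n,b} = (G/H_+ \wedge S^n) \vee_b B$ to $Y$ must restrict to the section $t$ on the summand $B$, and on the summand $G/H_+\wedge S^n$ it is a based $G$-map whose composite with $q$ is constant at $b$; it therefore factors through the fiber $Y_b$, whose basepoint $t(b)$ is $G_b$-fixed since the section is equivariant and $b$ is $G_b$-fixed. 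Using the induction--restriction adjunction ${\rm Map}^G_*(G/H_+\wedge S^n, Y_b) \cong {\rm Map}^H_*(S^n, Y_b) = \Omega^n((Y_b)^H)$, and applying the same analysis to fiberwise homotopies, one obtains a natural bijection
$$ {}_G[S_H^{n,b}, Y]^0_B \;\cong\; \pi_n\!\left((Y_b)^H,\, t(b)\right). $$
No fibrancy of $Y$ is needed for this point-set identification; fibrancy of $Y$ is used only --- together with the fact that each $S_H^{n,b}$ is cofibrant, being obtained from $B$ by attaching a single based $G$-cell --- to guarantee that ${}_G[S_H^{n,b},-]^0_B$ computes morphisms in the homotopy category.

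Second, I would deduce the generating-set property. Let $f\colon X\to Y$ be a map of based $G$-connected spaces over $B$ inducing a bijection on ${}_G[\mathbb{D},-]^0_B$ for every $\mathbb{D}\in\mathbb{D}_B^c$; after functorial fibrant replacement over $B$ we may assume $X$ and $Y$ are fibrant. By the computation above, $f$ induces a bijection $\pi_n((X_b)^H)\to\pi_n((Y_b)^H)$ for every $b\in B$, every $H\subseteq G_b$ and every $n>0$. Since the hypothesis of $G$-connectedness makes all these fixed spaces path-connected and the section supplies compatible basepoints, each map of fibers $(X_b)^H\to(Y_b)^H$ is a weak homotopy equivalence. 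Comparing the fibrations $X^H\to B^H$ and $Y^H\to B^H$, whose fibers over $b$ are $(X_b)^H$ and $(Y_b)^H$, the long exact sequences in homotopy show that $X^H\to Y^H$ is a weak equivalence for every $H$; hence $f$ is a weak $G$-equivalence over $B$, i.e. an isomorphism in the homotopy category. This is precisely the defining property of a generating set.

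Third, for compactness: given $\mathbb{D}=S_H^{n,b}$ and a sequence $Y_k\to Y_{k+1}$ with weak colimit $\hocolim Y_k$, a map $\mathbb{D}\to Y_k$ over $B$ consists of the fixed section on $B$ together with a map of the \emph{compact} $G$-space $G/H_+\wedge S^n$ into a fiber. Since $S^n$ is a finite $H$-CW complex, the standard argument that finite CW complexes are compact objects --- carried out relative to $B$ and equivariantly, exactly as in \cite[Lemma 7.5.14]{maysigurdsson} --- gives $\colim_k {}_G[\mathbb{D},Y_k]^0_B \cong {}_G[\mathbb{D},\hocolim_k Y_k]^0_B$.

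The main obstacle is the first step: one must splice together the chain of adjunctions --- ex-spaces over $B$ versus based spaces via restriction to a point of $B$, and $G$-orbits versus restriction to a finite subgroup --- keeping careful track of basepoints, sections, cofibrancy and fibrancy, so that the resulting identification of ${}_G[\mathbb{D},-]^0_B$ with homotopy groups of fixed points of fibers is valid in the homotopy category and not merely on underlying point-sets. Once this dictionary is in place, the generating-set and compactness statements are formal consequences of the corresponding arguments in \cite{maysigurdsson}.
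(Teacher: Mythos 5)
Your proposal is correct and follows essentially the same route as the paper, which for this lemma simply cites \cite[Lemmas 7.5.13--7.5.14]{maysigurdsson}: your fiberwise identification of ${}_G[S_H^{n,b},Y]^0_B$ with $\pi_n\left((Y_b)^H,t(b)\right)$, the five-lemma comparison of the fibrations $X^H\to B^H$ and $Y^H\to B^H$, and the smallness argument are exactly the content of those lemmas transported to the equivariant parametrized setting. One wording caveat: $G/H_+\wedge S^n$ is not a compact space when $G/H$ is infinite --- what makes the compactness step work is, as you then say, that after the induction--restriction adjunction everything reduces to the compact space $S^n$ mapping into $H$-fixed points of fibers.
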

We want to use the following abstract Brown representability theorem.
\begin{theorem}{\cite[Th. 7.5.7]{maysigurdsson}}\label{abstractbrown}
Let $\mathfrak{H}$ be a category with coproducts and weak pushouts. Assume that $\mathfrak{H}$ has a generating set of compact objects. Let $k:\mathfrak{H}\to \SETS$ be a contravariant functor that takes coproducts to products and weak pushouts to weak pullbacks. Then there is an object $Y\in\mathfrak{H}$ and a natural isomorphism $k(X)\cong \mathfrak{H}(X,Y)$ for $X\in \mathfrak{H}$.
\end{theorem}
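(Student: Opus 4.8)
The statement is the classical Brown representability theorem in the form used in \cite{maysigurdsson}, and the plan is to reproduce Brown's argument. The strategy splits into three parts. First, construct a candidate representing object $Y$ together with a universal element $u\in k(Y)$ by a countable cell-attachment process built out of the elements of the generating set $\cald$. Second, use compactness of the generators to pass to the homotopy colimit and verify that the induced natural transformation $T_u\colon \mathfrak{H}(-,Y)\to k$, $f\mapsto f^{*}u$, is a bijection on every object of $\cald$. Third, upgrade this to a bijection on all objects of $\mathfrak{H}$.

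First I would build $Y$ as a telescope $Y_0\to Y_1\to\cdots$. For the surjectivity of $T_u$, set $Y_0=\coprod_{D\in\cald}\coprod_{x\in k(D)}D$; since $k$ carries coproducts to products, $k(Y_0)=\prod_{D,x}k(D)$, and we take $u_0$ to be the element whose $(D,x)$-component is $x$, so that $T_{u_0}$ is already surjective on each generator. For injectivity I would proceed inductively: given $(Y_n,u_n)$, for every $D\in\cald$ and every pair of maps $f,g\colon D\to Y_n$ with $f^{*}u_n=g^{*}u_n$, form the weak pushout $Y_{n+1}$ that coequalizes $f$ and $g$ using a cylinder object on $D$; because $k$ sends weak pushouts to weak pullbacks, $u_n$ extends to some $u_{n+1}\in k(Y_{n+1})$ restricting to $u_n$. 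Re-running the surjectivity step at each stage, set $Y:=\hocolim_n Y_n$. Compactness of the generators gives $\mathfrak{H}(D,Y)=\colim_n\mathfrak{H}(D,Y_n)$, and the tower $(u_n)$ assembles to an element $u\in k(Y)$; this is where the weak-pushout-to-weak-pullback hypothesis is used a second time, applied to the mapping-telescope presentation of the homotopy colimit, together with the observation that the relevant tower of hom-sets has surjective transition maps, so that the corresponding $\lim^{1}$-obstruction vanishes. One then checks directly that $T_u$ is surjective (from the stage $Y_0$) and injective (from the coequalizing stages together with compactness) on every $D\in\cald$.

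The hard part is the last step: promoting ``$T_u$ is a bijection on $\cald$'' to ``$T_u$ is a bijection on all of $\mathfrak{H}$'', since in this abstract setting one cannot literally resolve an arbitrary object by the generators. I would handle it by re-running the cell-attachment construction \emph{relatively}: given an object $X$ and an element $a\in k(X)$, the same argument that produced $Y$ now, starting from $X$, produces a map $f\colon X\to Y$ with $f^{*}u=a$, giving surjectivity of $T_u$ at $X$; and a relative coequalizing argument shows that two maps $X\to Y$ inducing the same element of $k(X)$ become homotopic after one more cell attachment, giving injectivity. At each inductive stage the only inputs needed are that $\mathfrak{H}(-,Y)$ and $k$ both take coproducts to products and weak pushouts to weak pullbacks, which hold by hypothesis and by construction. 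Equivalently, one may isolate a comparison lemma: a natural transformation between two such half-exact functors on $\mathfrak{H}$ that is an isomorphism on a set of compact generators is an isomorphism, proved by a five-lemma argument along the weak-pullback exact sequences attached to the generators. Granting this step, naturality of the resulting isomorphism $k(X)\cong\mathfrak{H}(X,Y)$ is automatic, as it is realized by the single element $u$. Since this is precisely \cite[Th.~7.5.7]{maysigurdsson}, one could alternatively just invoke that reference.
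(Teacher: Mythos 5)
This theorem is not proved in the paper at all: it is quoted verbatim from May--Sigurdsson \cite[Th.\ 7.5.7]{maysigurdsson}, so the only thing to assess is your reconstruction of the standard Brown argument. The construction of $(Y,u)$ by iterated coproducts and weak pushouts and the verification on generators are fine, but the final step is where your sketch has a real gap. The relative cell-attachment starting from $X$ (more precisely from $X\vee Y$ with the class $(a,u)$) does \emph{not} produce a map $X\to Y$; it produces a map $X\to Y'$ into an enlargement $Y\to Y'$ carrying a class $u'$ with $T_{u'}$ bijective on generators and restricting to $u$. To finish you must show that $Y\to Y'$ is an isomorphism in $\mathfrak{H}$, and this is exactly where the defining property of a generating set is used: since $T_u$ and $T_{u'}$ are bijections on every $D\in\cald$ and are compatible with $Y\to Y'$, the map $\mathfrak{H}(D,Y)\to\mathfrak{H}(D,Y')$ is a bijection for all $D\in\cald$, hence $Y\to Y'$ is invertible by the very definition of generating set, and one composes $X\to Y'\cong Y$. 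Your write-up never invokes this isomorphism-detection property, and without it the promotion from generators to all objects does not close. The proposed alternative, a ``comparison lemma \ldots by a five-lemma argument along weak-pullback exact sequences,'' is not available here: $k$ is merely set-valued, there are no exact sequences of sets, and a natural transformation of half-exact functors that is bijective on generators cannot be shown to be an isomorphism in this abstract setting except by the relative construction just described (with one side representable).

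A smaller point: the $\lim^1$ remark in the telescope step is out of place. The existence of $u\in k(Y)$ restricting to all the $u_n$ follows directly from presenting the telescope as a weak pushout built from coproducts and using that $k$ sends it to a weak pullback, so that $k(Y)$ surjects onto the inverse limit of the $k(Y_n)$; no surjectivity of transition maps or $\lim^1$-vanishing is needed (that issue only arises if one wants $k(Y)\cong\lim_n k(Y_n)$, which the proof does not require). With these corrections your argument is the standard one; alternatively, as the paper does, one may simply cite \cite[Th.\ 7.5.7]{maysigurdsson}.
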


Given a model category $\mathfrak{T}$, it is possible to construct the \emph{homotopy category} $\mathfrak{H}$. For its definition see \cite{DwyerSpalinski}.
\subsection[Equivariant parametrized homotopy theories and Brown representability]{Equivariant parametrized homotopy theories and Brown representa-bility} \label{subsection Brown representability}

Let us recall that a functor $\calh_G^B$ defined on the category of based proper $G$-CW-complexes over $ B$ with values in $\IZ$-modules is a proper reduced generalized cohomology theory over $ B$ if it satisfies a parametrized version of the Eilenberg-Steenrod axioms (except the reduced dimension axiom). For  definitions of axioms see for example \cite[Section 4.3.2]{barcenasespinozajoachimuribe} or \cite[Definition 20.1.2]{maysigurdsson}.

\begin{theorem}
Let $\calh_G^*$ be a reduced proper $G$-equivariant parametrized cohomology theory over $ B$. Then, there exist a sequence of proper $G$-CW-complexes $\mathcal{R}_n$ over $ B$ and natural transformations such that
$$\calh_G^n(X,p,s)\cong\,  _G[X,\mathcal{R}_n]_{B}^0$$
for every based $G$-connected proper $G$-CW-complex $X$ over $ B$.
\end{theorem}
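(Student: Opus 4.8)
The plan is to deduce the statement from the abstract Brown representability theorem, Theorem~\ref{abstractbrown}, applied degree by degree to the homotopy category $\mathfrak{H}$ of based $G$-connected proper $G$-CW-complexes over $B$ attached to the well grounded $qf(\calc)$-model structure of Theorem~\ref{wellgrounded}. First I would record that this $\mathfrak{H}$, being the homotopy category of a model category, has all coproducts — computed by fiberwise wedge sums over $B$ — and weak pushouts, computed by fiberwise homotopy pushouts (double mapping cylinders over $B$); these are the weak colimits entering the notion of a generating set. By the Lemma recorded above concerning $\mathbb{D}_B^c$, the set $\mathbb{D}_B^c=\{S_H^{n,b}\}_{n>0}$ of fiberwise sphere objects is a generating set of compact objects in $\mathfrak{H}$. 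Hence $\mathfrak{H}$ satisfies the standing hypotheses of Theorem~\ref{abstractbrown}, and the whole matter reduces to verifying that, for each $n$, the contravariant functor $(X,p,s)\mapsto\calh_G^n(X,p,s)$, restricted to $\mathfrak{H}$ and regarded as a functor to $\SETS$, sends coproducts to products and weak pushouts to weak pullbacks.

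The first property is exactly the wedge (additivity) axiom of the parametrized cohomology theory $\calh_G^*$. The second is the parametrized Mayer--Vietoris property: a homotopy pushout square of based proper $G$-CW-complexes over $B$ gives rise to a long exact Mayer--Vietoris sequence in $\calh_G^*$, and exactness of this sequence at the pertinent spot says precisely that the square of abelian groups obtained by applying $\calh_G^n$ is a weak pullback (hence a weak pullback of sets). This is a formal consequence of the exactness and excision axioms recalled above (see \cite[Section~4.3.2]{barcenasespinozajoachimuribe}, \cite[Definition~20.1.2]{maysigurdsson}) together with the identification, in a homotopy pushout $Y\cup_X Z$, of the cofibre of $X\to Y$ with the cofibre of $Z\to Y\cup_X Z$. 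Granting these two checks, Theorem~\ref{abstractbrown} produces, for every $n$, an object $\mathcal{R}_n\in\mathfrak{H}$ and a natural isomorphism $\calh_G^n(X,p,s)\cong\mathfrak{H}(X,\mathcal{R}_n)={}_G[X,\mathcal{R}_n]_B^0$ for every based $G$-connected proper $G$-CW-complex $X$ over $B$.

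It remains to produce the natural transformations linking consecutive degrees and to organise $\{\mathcal{R}_n\}$ into a spectrum over $B$. For this I would combine the suspension isomorphism $\calh_G^n(X)\cong\calh_G^{n+1}(\Sigma_B X)$ with the fiberwise loop--suspension adjunction $\mathfrak{H}(\Sigma_B X,\mathcal{R}_{n+1})\cong\mathfrak{H}(X,\Omega_B\mathcal{R}_{n+1})$, where $\Omega_B$ is applied to a $qf(\calc)$-fibrant model of $\mathcal{R}_{n+1}$ so that it is homotopically invariant. Evaluating the resulting natural isomorphism $\mathfrak{H}(-,\mathcal{R}_n)\cong\mathfrak{H}(-,\Omega_B\mathcal{R}_{n+1})$ on the generating set and invoking Yoneda yields a weak equivalence $\mathcal{R}_n\stackrel{\simeq}{\longrightarrow}\Omega_B\mathcal{R}_{n+1}$ over $B$; its adjoint structure maps $\Sigma_B\mathcal{R}_n\to\mathcal{R}_{n+1}$ exhibit $\{\mathcal{R}_n\}$ as an $\Omega$-spectrum over $B$ and are the natural transformations asserted in the statement.

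The two axiom checks of the second paragraph are routine; the real obstacle is the model-categorical bookkeeping that makes the manipulations legitimate. One must genuinely work inside the homotopy category of the $qf(\calc)$-model structure of Theorem~\ref{wellgrounded}, where fibrant replacement, fiberwise path and cylinder objects, and the loop--suspension adjunction over $B$ are all available and homotopically meaningful, and one must keep careful track of the restriction to $G$-connected $X$ — which is exactly what makes the sphere objects $S_H^{n,b}$ with $n>0$ a \emph{generating} set (this is the content of the cited \cite[Lemma~7.5.13--14]{maysigurdsson}). Extending the representability statement to disconnected $X$, or off the subcategory of proper $G$-CW-complexes over $B$, would require the usual additional arguments and is not claimed here.
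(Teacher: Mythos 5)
Your proposal is correct and follows essentially the same route as the paper: the paper's proof is exactly the application of the abstract Brown representability theorem (Theorem \ref{abstractbrown}) to $\calh_G^*$ using the compact generating set $\mathbb{D}_B^c$ in the homotopy category coming from the $qf(\calc)$-model structure. You merely make explicit the routine hypothesis checks (additivity giving products on coproducts, Mayer--Vietoris giving weak pullbacks on weak pushouts) and the assembly of the representing objects $\mathcal{R}_n$ into an $\Omega$-spectrum over $B$ via the suspension isomorphism, points the paper leaves implicit.
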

\begin{proof}
Given a $G$-equivariant parametrized cohomology theory $\calh_G^*$ over $ B$, since the category of proper $G$-CW-complexes has a compact generating set $\mathbb{D}_B^c$, one applies Theorem \ref{abstractbrown} for the parametrized cohomology theory $\calh_G^*$, and we obtain a Brown Representability Theorem for reduced proper $G$-equivariant parametrized cohomology theories.
\end{proof}

We can apply the above theorem for Bredon cohomology associated to a cover. Note that this functor $\IH_G^p(\Phi(-),\mu)$ is an equivariant parametrized cohomology theory over $|\calc|$.  Therefore any operation in cohomology
$$ \mathbb{H}^p_G(\Phi X, \mu )  \to  \mathbb{H}^{p+q}_G(\Phi X,\mu)$$
which is functorial and only depends on the map $\mu:\Phi X \to |\mathcal{C}|$,
must be obtained by a map   
 ${\mathfrak{TR}}_p\to {\mathfrak{TR}}_{p+q}$ of $\OO_G^ P$-spaces over $|{\mathcal{C}} |$.

%
%



\typeout{-------------------------------------- References  ---------------------------------------}
\bibliographystyle{abbrv}
\bibliography{spectral}
\end{document}